\newtheorem{theorem}{Theorem}[section]
\newtheorem{lemma}[theorem]{Lemma}
\newtheorem{remark}[theorem]{Remark}
\begin{document}
\setcounter{page}{1}
\title{Monotonicity of the first eigenvalue of \\
the Laplace and the $p$-Laplace operators under a forced mean
curvature flow}
\author{Jing Mao}
\date{}
\protect\footnotetext{\!\!\!\!\!\!\!\!\!\!\!\! {MSC 2010: 58C40;
53C44}
\\{ ~~Key Words: Ricci-Hamilton flow; Mean curvature flow; Laplace operator; $p$-Laplace operator.}  }
\maketitle ~~~\\[-15mm]
\begin{center}{\footnotesize  Department of Mathematics, Harbin Institute
of Technology (Weihai), Weihai, 264209, China \\
jiner120@163.com, jiner120@tom.com}
\end{center}

\begin{abstract}
In this paper, we would like to give an answer to \textbf{Problem 1}
below issued firstly in [J. Mao, Eigenvalue estimation and some
 results on finite topological type, Ph.D. thesis, IST-UTL, 2013].
 In fact, by imposing some conditions on the mean curvature of the initial
 hypersurface and the coefficient function of the forcing term of a forced
 mean curvature flow considered here, we can obtain that
 the first eigenvalues of the Laplace
 and the $p$-Laplace operators are monotonic under this flow. Surprisingly, during this process, we
 get an interesting byproduct, that is, without any complicate
 constraint, we can give lower bounds for the
 first nonzero closed eigenvalue of the Laplacian provided additionally the second fundamental form of the initial hypersurface
 satisfies a pinching condition.
\end{abstract}

\markright{\sl\hfill  J. Mao \hfill}

\section{Introduction }
\renewcommand{\thesection}{\arabic{section}}
\renewcommand{\theequation}{\thesection.\arabic{equation}}
\setcounter{equation}{0} \setcounter{maintheorem}{0}

The mathematical genius, Perelman, in his famous work \cite{gpere}
introduced a functional, which is called $\mathcal{F}$-functional,
for a prescribed closed Riemannian manifold $(M,g)$ and a function
$f$ on $M$ defined as follows
\begin{eqnarray*}
\mathcal {F}(g,f):=\int_{M}\left(R+|\nabla{f}|^{2}\right)e^{-f}d\mu,
\end{eqnarray*}
with $R$ here the scalar curvature and $d\mu$ the volume element of
$M$. Denote by $\nabla$ and $\Delta$ the gradient and the Laplace
operators of $M$, respectively. For the following coupled system
\begin{eqnarray*}
\left\{
\begin{array}{ll}
\frac{\partial}{\partial{t}}g_{ij}=-2R_{ij},\\
\frac{\partial}{\partial{t}}f=-\Delta{f}-R+|\nabla{f}|^{2},
\end{array}
\right.
\end{eqnarray*}
with the first equation the famous Ricci-Hamilton flow,  he proved
that the $\mathcal{F}$-functional is nondecreasing under the Ricci
flow, i.e.
\begin{eqnarray*}
\frac{d}{dt}\mathcal{F}=2\int_{M}\left|R_{ij}+\nabla_{i}\nabla_{j}f\right|^{2}e^{-f}d\mu\geq0.
\end{eqnarray*}
Define
\begin{eqnarray*}
\lambda(g):=\inf\left\{\mathcal{F}(g,f)\Big{|}f~{\rm{runs~over~all~smooth~functions,
and ~satisfies}}\int_{M}e^{-f}d\mu=1\right\},
\end{eqnarray*}
and then $\lambda(g)$ is the lowest eigenvalue of the operator
$(-4\Delta+R)$. This fact can be obtained easily by making a
transformation $u=e^{-f/2}$. Then $\lambda(g)$ can be defined
equivalently as follows
\begin{eqnarray*}
\lambda(g):=\inf\left\{\int_{M}(4|\nabla{u}|^{2}+Ru^{2})d\mu\Big{|}u~{\rm{runs~over~all~smooth~functions,~and}}
\int_{M}u^{2}d\mu=1\right\},
\end{eqnarray*}
which implies that $\lambda(g)=\lambda_{1}(-4\Delta+R)$, the first
eigenvalue of $(-4\Delta+R)$. Besides, $\lambda(g)$ is nondecreasing
since $\mathcal{F}$ is nondecreasing. By using this fact, Perelman
has shown that there are no nontrivial steady or expanding breathers
on compact manifolds (see sections 2, 3, and 4 of \cite{gpere}).

From Perelman's this work, we know that monotonicity of the first
eigenvalue of some operator related to the Laplacian under curvature
flows, like the Ricci flow, should be worthy to be investigated.
Because of this, many mathematicians have made efforts on this
direction, and some interesting results have also been obtained
after Perelman's pioneering work. For instance, Ma \cite{lma}
studied the first eigenvalue of the Laplace operator $\Delta$,
subject to the Dirichlet boundary condition, on a compact domain,
with smooth boundary in a compact or a complete noncompact manifold,
under the unnormalized Ricci-Hamilton flow, and obtained the
monotonicity of the first eigenvalue of $\Delta$ under several
assumptions on the scalar curvature of the prescribed manifold
therein. Cao \cite{caox} showed that, under the Ricci flow, the
eigenvalues of the operator $(-\Delta+R/2)$, with $R$ the scalar
curvature, are non-decreasing for manifolds with nonnegative
curvature operator, and then, by applying this monotonicity of the
eigenvalues, he proved that the only steady Ricci breather with
nonnegative curvature operator is the trivial one (see section 4 of
\cite{caox}). Without assuming the nonnegativity of the curvature
operator, Li \cite{jfli} also proved the nondecreasing property for
the eigenvalues of the operator $(-\Delta+R/2)$. Cao \cite{caox2}
proved that, under the unnormalized Ricci flow, the first eigenvalue
of $(-\Delta+cR)$, with $c\geq1/4$ and $R$ the scalar curvature, is
nondecreasing, which generalized his previous work \cite{caox}.
Recently, Cao, Hou, and Ling \cite{xsj} derived a monotonicity
formula for the first eigenvalue of the operator $(-\Delta+aR)$,
with $0<a\leq1/2$, on closed surfaces with the scalar curvature
$R\geq0$ under the unnormalized Ricci flow.

The mean curvature flow (MCF) also has connections with the Ricci
flow which is a powerful tool to solve the $3$-dimensional
Poincar\'{e} conjecture. There are surprising analogies between the
Ricci flow and the MCF. Indeed, many results hold in a similar way
for both flows, and several ideas have been successfully transferred
from one context to the other (see, for instance, \cite[Corollary
2.5]{lmw}, where we have used a principle, the maximum principle for
tensors, appearing in the Ricci flow, supplied by Hamilton, to prove
the convexity-preserving property for the curvature flow considered
therein). However, at the moment there is no formal way of
transforming one of them into the other.

Because of the deep connection between the MCF and the Ricci flow,
it is natural to ask whether or not we could derive monotonicity
formulas for the first eigenvalue of some geometric operators
related to the Laplacian under the MCF or some other deformations of
the MCF, like the volume-preserving MCF, the area-preserving MCF,
the forced MCF (MCF with a prescribed forcing term), etc. Recently,
under several assumptions on the mean curvature of a given closed
Riemannian manifold, Zhao \cite{zhaol} proved that the first
eigenvalue of the $p$-Laplacian on the manifold is nondecreasing
along powers of the $m$th MCF (see, e.g., \cite{eccs} for the basic
information on this flow). This provides us the feasibility of
trying to derive the monotonicity of the first eigenvalue of the
Laplacian or the $p$-Laplacian under curvature flows.

Denote by $M_{0}$ a compact and strictly convex hypersurface of
dimension $n\geq2$, without boundary, smoothly embedded in the
Euclidean space $\mathbb{R}^{n+1}$ and represented locally by a
diffeomorphism
$X_{0}:U\subset\mathbb{R}^{n}\rightarrow{X_{0}(U)}\subset{M_{0}}\subset\mathbb{R}^{n+1}$.
Consider that $M_{0}$ evolves along the forced MCF defined as
follows
\begin{eqnarray}  \label{1.1}
\left\{
\begin{array}{ll}
\frac{\partial}{\partial{t}}X(x,t)=-H(x,t)\vec{v}(x,t)+\kappa(t)X(x,t),
\qquad
x\in{M}^{n}_{0}, \quad t>0,\\
X(\cdot,0)=X_{0}, & \quad
\end{array}
\right.
\end{eqnarray}
with $\vec{v}(x, t)$ the outer unit normal vector of
$M_{t}=X_{t}(M_{0})$ at $X(x, t)=X_{t}(x)$, $H$ the mean curvature
of $M_{t}$, and $\kappa(t)$ a continuous function of $t$. Li, Mao
and Wu \cite{lmw} proved that the convexity is preserving as the
case of MCF, and the evolving convex hypersurfaces may shrink to a
point in finite time if the forcing term is small, or exist for all
time and expand to infinity if it is large enough (see \cite[Theorem
1.1]{lmw} or Theorem \ref{theoremmain} here for the precise
statement). In fact, the forced MCF (\ref{1.1}) can be obtained by
adding a forcing term in direction of the position vector to the
classical MCF (only when the ambient space is a Euclidean space),
and this type of forced (or forced hyperbolic) mean curvature flows
has been studied in \cite{lmw,m2,m3,mlw} with some interesting
results on the convergence or the long time existence obtained.

As pointed out in \cite{lmw}, the tangent component of $X(x,t)$ does
not affect the behavior of the evolving hypersurface, but usually
the normal component of $X(x,t)$ is not a unit normal vector, which
leads to the fact that the flow (\ref{1.1}) differs from the
classical MCF. Readers can find that the convergent situation of our
flow (\ref{1.1}) is more complicated than that of the MCF even if
the initial hypersurface is a sphere (see Remark \ref{remarkadd1}).
In fact, it can be seen as an extension of the MCF, since the flow
(\ref{1.1}) degenerates to be the MCF if $\kappa(t)\equiv0$.

Based on the result concerning the convergence or the long time
existence we have obtained in \cite{lmw}, and the fact that Zhao can
get a monotonicity formula for the first eigenvalue of the
$p$-Laplacian under powers of the $m$th MCF in \cite{zhaol}, we
might consider the following problem.

 \vskip 1 mm
{\bf Problem 1}. \emph{ For a compact and strictly convex
hypersurface $M_{0}$ of dimension $n\geq2$, without boundary, which
is embedded smoothly in $\mathbb{R}^{n+1}$ and can be represented
locally by a diffeomorphism
$X_{0}:U\subset\mathbb{R}^{n}\rightarrow{X_{0}(U)}\subset{M_{0}}\subset\mathbb{R}^{n+1}$,
could we derive a monotonicity formula for the first eigenvalue of
the Laplace and the $p$-Laplace operators on $M_{t}$ under the
forced MCF defined by (\ref{1.1})? }

Several eigenvalue problems have been studied by the author in
\cite{fmi,m,m1,PhDJingMao} and some interesting conclusions have
been obtained therein. This experience somehow supplies the
possibility to answer the above \emph{Problem 1}. In fact, based on
the main conclusions for the flow (\ref{1.1}) in \cite{lmw}, we can
give an answer to this problem (see Theorem \ref{theorem3} for the
details).

As mentioned in the Abstract, during the process of trying to get
the monotonicity of the first non-zero closed eigenvalue, we can
obtain an interesting byproduct, which somehow reveals the
convergence or expansion of the evolving hypersurfaces under the
flow (\ref{1.1}) from the aspect of eigenvalues. As in Section 2,
denote by $H$ the mean curvature, $h_{ij}$ and $g_{ij}$ the
components of the second fundamental form and the Riemannian metric
of the prescribed manifold, respectively. By imposing a pinching
condition for the second fundamental form of the initial
hypersurface, we can prove the following.

\begin{theorem} \label{theorembound}
If, in addition, there exist positive constants
$\alpha_{1},\alpha_{2}, \ldots,\alpha_{n}$ such that the initial
hypersurface $M_0$ satisfies
\begin{eqnarray}  \label{pinchingc}
h_{ij}=\alpha_{i}Hg_{ij}, \qquad
\mathrm{where}~\sum\limits_{i=1}^{n}\alpha_{i}=1~\mathrm{and}
~\left|\alpha_{i}-\frac{1}{n}\right|\leq\epsilon
\end{eqnarray}
for small enough $\epsilon$ only depending on $n$,
 then under the flow (\ref{1.1}) we have
\begin{eqnarray*}
\lambda_{1}(t)\geq
e^{-2\int_{0}^{t}\kappa(\tau)d\tau}\cdot\lambda_{1}(0)
\end{eqnarray*}
for any $0\leq{t}<T_{\mathrm{m}}$, where, of course,
$\lambda_{1}(0)$ and $\lambda_{1}(t)$ are the first nonzero closed
eigenvalues of the Laplace operator on $M_0$ and $M_t$ respectively,
and $T_{\mathrm{m}}$ is defined by (\ref{key}).
\end{theorem}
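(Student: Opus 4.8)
The plan is to track the evolution of the Rayleigh quotient for the first nonzero closed eigenvalue along the flow, exploiting the special pinching structure \eqref{pinchingc} of the initial hypersurface. First I would recall that under a general normal-speed deformation, the metric evolves by $\partial_t g_{ij} = 2(\kappa\, g_{ij}\,?\dots) $— more precisely, for the flow \eqref{1.1} the induced metric satisfies $\partial_t g_{ij} = -2H h_{ij} + 2\kappa g_{ij}$, since the position-vector forcing term contributes a conformal-type piece $2\kappa g_{ij}$ (the tangential part of $\kappa X$ being immaterial, as noted in the excerpt) and the MCF part contributes the usual $-2Hh_{ij}$. Plugging the pinching relation $h_{ij}=\alpha_i H g_{ij}$ (no sum) into this gives $\partial_t g_{ij} = -2\alpha_i H^2 g_{ij} + 2\kappa g_{ij}$, so the metric stays ``diagonal'' relative to the fixed eigendirections of the shape operator. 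The key observation is that this makes $g_{ij}(t)$ a time-dependent rescaling of $g_{ij}(0)$ along each principal direction, and in particular $\partial_t g_{ij} \le 2\kappa\, g_{ij}$ as a tensor inequality, because $\alpha_i H^2 \ge 0$.

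Next I would write $\lambda_1(t) = \int_{M_t}|\nabla u|^2\, d\mu_t$ for the (time-dependent) normalized first eigenfunction $u$ with $\int_{M_t} u\, d\mu_t = 0$ and $\int_{M_t} u^2\, d\mu_t = 1$, and differentiate. The standard device is to fix a test function and use the variational characterization: since at time $t_0$ the eigenfunction $u$ realizes $\lambda_1(t_0)$, one estimates $\frac{d}{dt}\lambda_1(t)\big|_{t_0}$ by differentiating the Rayleigh quotient of a suitably transported test function, so that the eigenfunction-variation terms either vanish (by the eigenvalue equation and the normalization) or can be dropped. The volume element evolves by $\partial_t(d\mu_t) = \tfrac12 g^{ij}\partial_t g_{ij}\, d\mu_t = (-H^2 + n\kappa)\, d\mu_t$ using $\sum_i \alpha_i = 1$, and $\partial_t(g^{ij}) = -g^{ik}g^{jl}\partial_t g_{kl} = (2\alpha_i H^2 - 2\kappa)g^{ij}$. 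Assembling $\frac{d}{dt}\int |\nabla u|^2 d\mu = \int \big[(\partial_t g^{ij})\nabla_i u\nabla_j u + g^{ij}\nabla_i u\nabla_j u\,(-H^2+n\kappa)\big] d\mu$ plus the eigenfunction-variation terms, one sees each pointwise coefficient is of the form $2\alpha_i H^2 - 2\kappa - H^2 + n\kappa$, and one wants the net effect to be $\ge -2\kappa\,|\nabla u|^2$, i.e. the ``extra'' terms $(2\alpha_i - 1)H^2 + (n-2)\kappa$ summed against $|\nabla_i u|^2$ should have the right sign. This is exactly where the pinching smallness $|\alpha_i - 1/n|\le\epsilon$ enters: it forces $2\alpha_i - 1$ close to $2/n - 1$ (negative for $n\ge 3$, zero for $n=2$), and a careful bookkeeping — together with controlling the normalization-correction terms that come from differentiating the constraints $\int u^2 = 1$ and $\int u = 0$ against the evolving $d\mu_t$ — should yield the differential inequality $\frac{d}{dt}\lambda_1(t) \ge -2\kappa(t)\,\lambda_1(t)$.

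Finally, integrating the differential inequality $\frac{d}{dt}\big(e^{2\int_0^t\kappa(\tau)d\tau}\lambda_1(t)\big)\ge 0$ over $[0,t]$ gives $\lambda_1(t)\ge e^{-2\int_0^t\kappa(\tau)d\tau}\lambda_1(0)$ for all $0\le t < T_{\mathrm m}$, which is the claimed bound. I expect the main obstacle to be the bookkeeping in the second step: one must verify that \eqref{pinchingc} is preserved (or at least that the relevant inequalities persist) along the flow — this is plausibly where the results of \cite{lmw} on convexity-preservation and on the structure of the flow are invoked — and one must handle the correction terms arising because $u(\cdot,t)$ is not a fixed function but must be re-normalized with respect to $d\mu_t$, which introduces lower-order terms that need to be absorbed using $\int u\, d\mu_t = 0$ and the smallness of $\epsilon$. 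The $n=2$ versus $n\ge 3$ cases may need to be distinguished, since the sign of $(n-2)\kappa$ and of $2/n - 1$ behaves differently; if $\kappa$ is allowed to change sign, extra care is required, but the exponential factor $e^{-2\int_0^t\kappa}$ already anticipates an arbitrary continuous $\kappa(t)$.
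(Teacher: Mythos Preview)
Your overall strategy --- differentiate the Rayleigh quotient, use the metric evolution $\partial_t g_{ij}=-2Hh_{ij}+2\kappa g_{ij}$, and aim for $\frac{d}{dt}\lambda_1\ge-2\kappa\lambda_1$ --- is the right framework, and the final integration step is exactly what the paper does. You also correctly flag that the pinching condition (\ref{pinchingc}) must be shown to persist along the flow; the paper handles this separately (Lemma~\ref{lemma1}) via Hamilton's tensor maximum principle applied to $M_{ij}=h_{ij}-\alpha_i H g_{ij}$.

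The gap is in your ``careful bookkeeping'' paragraph. Your guess at what must have the right sign --- the combination $(2\alpha_i-1)H^2+(n-2)\kappa$ --- is not the relevant quantity, and the mechanism you propose (pinching forces $2\alpha_i-1$ near $2/n-1$) actually produces a \emph{negative} coefficient for all $n\ge2$, not a positive one. If you carry out the normalization correction you defer, the $(n-2)\kappa$ piece cancels entirely (differentiating $\int u^2\,d\mu=1$ contributes $\lambda_1\int u^2H^2-n\kappa\lambda_1$), and you are left with
\[
\frac{d}{dt}\lambda_1=-2\kappa\lambda_1+2\int_{M_t}Hh^{ij}\nabla_iu\nabla_ju-\int_{M_t}H^2|\nabla u|^2+\lambda_1\int_{M_t}u^2H^2.
\]
With the pinching the second term is $\approx\tfrac{2}{n}\int H^2|\nabla u|^2$, so the net coefficient on $\int H^2|\nabla u|^2$ is $\tfrac{2}{n}-1<0$. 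You would then need $\lambda_1\int u^2H^2\ge(1-\tfrac{2}{n})\int H^2|\nabla u|^2$, which is not obvious and cannot be extracted from the smallness of $\epsilon$ alone.

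The paper avoids this dead end by organising the computation differently. Its evolution formula (Theorem~\ref{theorem1}) keeps the last two terms bundled as $2\int_{M_t} uH\,\nabla_ih^{ij}\,\nabla_ju$. The decisive step is to insert the pinching \emph{inside the divergence}: writing $\nabla_ih^{ij}=\alpha_j g^{ij}\nabla_iH$ and integrating by parts against $u\,\nabla_ju$ produces terms carrying the weight $\alpha_j\approx\tfrac{1}{n}$ rather than $1$. After this one obtains
\[
\frac{d}{dt}\lambda_1\ge-2\kappa\lambda_1+\Bigl(\tfrac{1}{n}-3\epsilon\Bigr)\int_{M_t}H^2|\nabla u|^2+\Bigl(\tfrac{1}{n}-2\epsilon\Bigr)\lambda_1\int_{M_t}u^2H^2,
\]
and now \emph{both} extra terms are manifestly nonnegative for small $\epsilon$, giving $\frac{d}{dt}\lambda_1\ge-2\kappa\lambda_1$ with no sign hypothesis on $\kappa$ and no case split between $n=2$ and $n\ge3$. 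So the missing idea in your sketch is this weighted integration by parts that turns the coefficient from $2/n-1$ into $1/n$, not the sign of $2\alpha_i-1$.
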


\begin{remark}
\rm{For an $n$-dimensional compact, connected and oriented
Riemannian manifold $(M,g)$ without boundary isometrically immersed
in $\mathbb{R}^{n+1}$, it is said to be \emph{almost-umbilical} if
there exists $\theta\in(0,1)$ such that
$\|A-cg\|_{\infty}\leq\epsilon$ for a positive constant $c$, with
$\epsilon$ small enough depending on $n$, $c$ and $\theta$, where
$A$ is the second fundamental form of $M$. So, clearly, if the
initial hypersurface $M_{0}$ satisfies the pinching condition
(\ref{pinchingc}), then it is almost-umbilical. A well-known result
states that a totally umbilical hypersurface of $\mathbb{R}^{n+1}$
which is not totally geodesic is a round sphere. Clearly a totally
umbilical hypersurface of $\mathbb{R}^{n+1}$ must be
almost-umbilical with $c=H/n$. However, an almost-umbilical
hypersurface of $\mathbb{R}^{n+1}$ may not be totally umbilical. For
instance, considering a sphere with ideal elasticity in
$\mathbb{R}^3$, and orthogonally and very slightly squashing this
sphere at a pair of antipodal points such that the new geometric
object (might be an ellipsoid) obtained by this deformation
satisfies the almost-umbilical condition. In this case, the
deformation of the sphere might be ignored but it do has
deformation. Therefore, it is natural to ask if and how the
almost-umbilical hypersurfaces are ``\emph{close}" to round spheres.
In fact, there are many interesting conclusions walking on this
direction. For instance, Shiohama and Xu \cite{kx1,kx2} proved that
almost-umbilical hypersurfaces of Euclidean space are homeomorphic
to the sphere if imposing a condition on Betti numbers. Recently,
Roth \cite{jr} proved that an $n$-dimensional compact, connected and
oriented \emph{almost-umbilical} Riemannian manifold $M$ without
boundary isometrically immersed in $\mathbb{R}^{n+1}$ is
diffeomorphic and $\theta$-quasi-isometric to
$\mathbb{S}^{n}(\frac{1}{c})$, i.e. there exists a diffeomorphism
$F$ from $M$ into $\mathbb{S}^{n}(\frac{1}{c})$ such that, for any
$x\in{M}$ and any unitary vector $X\in{T_{x}M}$, we have
$\left||d_{x}F(X)|^{2}-1\right|\leq\theta$. Hence, according to
these facts, our pinching condition (\ref{pinchingc}) is feasible
and also reasonable. Especially, for (\ref{pinchingc}), when
$\alpha_{i}=1/n$ for each $1\leq{i}\leq{n}$, then the initial
hypersurface $M_{0}$ must be a sphere with a prescribed radius, say
$r_{0}$, and moreover, the evolving hypersurface $M_{t}$ must be a
sphere with radius $r(t)$ given by (\ref{solution1}) (see Remark
\ref{remarkadd1} for details). Correspondingly,
$\lambda_{1}(t)=n/r^{2}(t)$, which clearly satisfies the conclusion
of Theorem \ref{theorembound}.

}
\end{remark}

The paper is organized as follows. We recall some basic knowledge
about the Laplacian and the $p$-Laplacian in the next section.
Besides, we also mention some useful conclusions of the forced MCF
(\ref{1.1}). In Section 3, we give the proofs of Theorems
\ref{theorem1} and \ref{theorem2}. In Section 4, by applying Theorem
\ref{theorem1}, we successfully give lower bounds for the first
nonzero closed eigenvalue of the Laplace operator provided, in
addition, the initial hypersurface satisfies the pinching condition
(\ref{pinchingc}). Theorem \ref{theorem3} will be proved in the last
section.

\section{Preliminaries}
\renewcommand{\thesection}{\arabic{section}}
\renewcommand{\theequation}{\thesection.\arabic{equation}}
\setcounter{equation}{0} \setcounter{maintheorem}{0}

In this section, we would like to give a brief introduction to the
eigenvalue problem first and then recall some facts about the forced
MCF (\ref{1.1}).

In fact, due to the related conditions, the eigenvalue problem can
be classified into several types, but here we just focus on the
closed eigenvalue problem. For the consistency of the symbols, as
before, let $M_{0}$ be an $n$-dimensional compact Riemannian
manifold without boundary. The so-called closed eigenvalue problem
is actually to find all possible real $\lambda$ such that there
exists non-trivial functions $u$ satisfying
\begin{eqnarray*}
\Delta{u}+\lambda{u}=0, \qquad \mathrm{on}~M_{0}
\end{eqnarray*}
with $\Delta$ the Laplacian on $M_0$, which is given by
\begin{eqnarray*}
\Delta{u}=\mathrm{div}(\nabla{u})=\frac{1}{\sqrt{\det(g_{ij})}}\sum\limits_{i,j=1}^{n}\frac{\partial}{\partial{x_{i}}}\left(\sqrt{\det(g_{ij})}g^{ij}
\frac{\partial{u}}{\partial{x_{j}}}\right)
\end{eqnarray*}
in a local coordinate system $\{x_{1},x_{2},\ldots,x_{n}\}$ of
$M_{0}$. Here $\mathrm{div}$ and $\nabla$ denote the divergence
operator and the gradient operator on $M_{0}$, respectively.
Moreover,
$|\nabla{u}|^2=|\nabla{u}|^2_{g}=\sum_{i,j=1}^{n}g^{ij}\frac{\partial{u}}{\partial{x_{i}}}\frac{\partial{u}}{\partial{x_{j}}}$,
and $(g^{ij})=(g_{ij})^{-1}$ is the inverse of the metric matrix. It
is well-known that $\Delta$ only has discrete spectrum in this
setting ($M_{0}$ is compact without boundary). Each element in the
discrete spectrum is called the eigenvalue of the Laplacian
$\Delta$. It is easy to find that $0$ is an eigenvalue of $\Delta$
and whose eigenfunction should be chosen to be a constant function.
By Rayleigh's theorem and Max-min principle, together with the fact
that eigenfunctions belonging to different eigenvalues are
orthogonal, we know that the first non-zero (i.e. the lowest
non-zero) closed eigenvalue $\lambda_{1}(M)$ ($\lambda_{1}$ for
short) can be characterized by
 \begin{eqnarray}  \label{2.1}
 \lambda_{1}=\inf\left\{\frac{\int_{M_0}|\nabla{u}|^{2}d\mu_0}{\int_{M_0}|u|^{2}d\mu_0}\Big{|}
u\neq0,
u\in{W}^{1,2}(M_0),~\mathrm{and}~\int_{M_0}ud\mu_0=0\right\},
 \end{eqnarray}
where $W^{1,2}(M_0)$ is the completion of the set $C^{\infty}(M_0)$
of the smooth functions on $M_0$ under the Sobolev norm
\begin{eqnarray*}
\|u\|_{1,2}:=\left(\int_{M_0}|u|^{2}d\mu_0+\int_{M_0}|\nabla{u}|^{2}d\mu_0\right)^{1/2},
\end{eqnarray*}
 and  $d\mu_0$ denotes the volume element of $M_0$.

\emph{Now, we would like to make an agreement. That is, for the
convenience, in the sequel we will drop the volume element for each
integration appearing below. We also make an agreement on the range
of indices as follows}
\begin{eqnarray*}
1\leq{i,j,\ldots}\leq{n}.
\end{eqnarray*}

The $p$-Laplacian ($1<p<\infty$) is a natural generalization of the
Laplace operator. In fact, the so-called $p$-Laplacian eigenvalue
problem is to consider the following nonlinear second-order partial
differential equation (PDE for short)
 \begin{eqnarray*}
\Delta_{p}u+\lambda|u|^{p-2}u=0, \qquad \mathrm{on}~M_{0},
 \end{eqnarray*}
where, in local coordinates $\{x_{1},\ldots,x_{n}\}$ on $M_0$,
$\Delta_{p}$ is defined by
\begin{eqnarray*}
\Delta_{p}u=\frac{1}{\sqrt{\det(g_{ij})}}\sum\limits_{i,j=1}^{n}\frac{\partial}{\partial{x_{i}}}\left(\sqrt{\det(g_{ij})}g^{ij}|\nabla{u}|^{p-2}
\frac{\partial{u}}{\partial{x_{j}}}\right).
\end{eqnarray*}
Similar to the case of the linear Laplace operator, $\Delta_{p}$ has
discrete spectrum on $M_0$ when $M_0$ is compact. However, we do not
know whether it only has the discrete spectrum or not. This
situation is different from the case of the Laplacian, when the
domain considered is bounded. Besides, the first non-zero closed
eigenvalue $\lambda_{1,p}(M_0)$ ($\lambda_{1,p}$ for short) of
$\Delta_{p}$ can be characterized by
\begin{eqnarray} \label{2.2}
\lambda_{1,p}=\inf\left\{ \frac{\int_{M_0}|\nabla
u|^{p}}{\int_{M_0}|u|^{p}}\Big{|} u \in W^{1,p}(M_0), u\neq 0,
~\mathrm{and}~\int_{M_0}|u|^{p-2}u=0\right\},
\end{eqnarray}
with $W^{1,p}(M_0)$  the completion of the set $C^{\infty}(M_0)$
under the Sobolev norm
\begin{eqnarray*}
\|u\|_{1,p}:=\left(\int_{M_0}|u|^{p}+\int_{M_0}|\nabla{u}|^{p}\right)^{1/p}.
\end{eqnarray*}
Now, we would like to recall several evolution equations derived in
\cite{lmw}, which will be used to prove our main conclusions. In
fact, for the unnormalized forced MCF (\ref{1.1}), we have (cf.
\cite[Lemma 2.2]{lmw})
\begin{eqnarray} \label{2.3}
\frac{\partial}{\partial{t}}g_{ij}=-2Hh_{ij}+2\kappa(t)g_{ij}
\end{eqnarray}
and
 \begin{eqnarray}  \label{2.4}
  \frac{\partial{h_{ij}}}{\partial{t}}=\Delta{h_{ij}}-2Hh_{il}g^{lm}h_{mj}+|A|^{2}h_{ij}+\kappa(t)h_{ij}, \qquad\quad \frac{\partial{H}}{\partial{t}}=\Delta{H}+|A|^{2}H-\kappa(t)H,
 \end{eqnarray}
with $g_{ij}$ the component of the Riemannian metric on $M_t$, $H$
the mean curvature and $h_{ij}$, $|A|^{2}$ the component and the
squared norm of the second fundamental form of $M_t$, respectively.
Denote by $T_{\mathrm{max}}$ the maximal existence time of the
forced MCF (\ref{1.1}). In fact, the existence of $T_{\max}>0$ can
be obtained by the fact that the flow (\ref{1.1}) is a parabolic
equation and which can be converted to a second-order strictly
parabolic PDE, leading to the existence of the maximal time interval
$[0,T_{\mathrm{max}})$ (see, for instance, \cite{m2} for a detailed
explanation of this kind of trick). In order to know more
information about the flow (\ref{1.1}) as
$t\rightarrow{T_{\mathrm{max}}}$, as the case of the classical MCF,
we have to make a rescale to this flow. More precisely, for any
$t\in[0,T_{\mathrm{max}})$, let $\phi(t)$ be a positive factor such
that the hypersurface $\widetilde{M}_{t}$ defined by
$\widetilde{X}(x,t)=\phi(t)X(x,t)$ has total area equal to $|M_0|$
(i.e. the area of $M_0$). That is to say,
$\int_{\widetilde{M}_{t}}=|M_0|$. Differentiating this equality with
respect to $t$, we have
\begin{eqnarray}  \label{2.5}
\phi^{-1}\frac{\partial\phi}{\partial{t}}=\frac{1}{n}\frac{\int_{M_t}H^{2}}{\int_{M_t}}-\kappa(t)=\frac{1}{n}h-\kappa(t).
\end{eqnarray}
 At the same time, choosing a new time
variable
\begin{eqnarray*}
\widetilde{t}(t)=\int_{0}^{t}\phi^{2}(\tau)d\tau=\int_{0}^{t}\phi^{2}(\tau),
\end{eqnarray*}
then we have
\begin{eqnarray*}
\widetilde{g}_{ij}=\phi^{2}g_{ij}, \quad
\widetilde{h}_{ij}=\phi{h_{ij}}, \quad \widetilde{H}=\phi^{-1}H,
\quad |\widetilde{A}|^{2}=\phi^{-2}|A|^{2},
\end{eqnarray*}
and the evolution equation (\ref{1.1}) becomes
\begin{eqnarray}  \label{2.6}
\left\{
\begin{array}{lll}
\frac{\partial}{\partial{\widetilde{t}}}\widetilde{X}(x,t)=-\widetilde{H}\cdot\widetilde{\vec{v}}+\frac{1}{n}\widetilde{h}\widetilde{X},\\
\\
\widetilde{X}(\cdot,0)=\widetilde{X}_{0}, & \quad
\end{array}
\right.
\end{eqnarray}
where
$\widetilde{h}=\phi^{-2}h=\int_{\widetilde{M_{\widetilde{t}}}}\widetilde{H}^{2}/\int_{\widetilde{M_{\widetilde{t}}}}$.
Clearly, we can obtain the normalized evolution equation for the
metric as follows
\begin{eqnarray} \label{2.7}
\frac{\partial\widetilde{g}_{ij}}{\partial\tilde{t}}=\frac{\partial{t}}{\partial\tilde{t}}\frac{\partial(\phi^{2}g_{ij})}{\partial{t}}
=\frac{2}{n}\widetilde{h}\widetilde{g}_{ij}-2\widetilde{H}\widetilde{h}_{ij}.
\end{eqnarray}
 By \cite{lmw}, we know there always exists a time sequence
$\{T_{i}\}$ in $[0,T_{\mathrm{max}})$ such that
$T_{i}\rightarrow{T_{\mathrm{max}}}$ as $i\rightarrow\infty$, and
moreover the limit
\begin{eqnarray}  \label{2.8}
\lim_{T_{i}\rightarrow{T_{\mathrm{max}}}}\phi(T_i)=\Xi
\end{eqnarray}
 holds (see the end of Section 4 of \cite{lmw} for the detailed statement).
 About the forced MCF (\ref{1.1}) and its normalized flow
 (\ref{2.6}), Li, Mao and Wu proved the following
 conclusion (cf. \cite[Theorem 1.1]{lmw}).
 \begin{theorem} \label{theoremmain}
Let $M_0$ be an $n$-dimensional smooth, compact and strictly convex
hypersurface immersed in $\mathbb{R}^{n+1}$ with $n\geq2$. Then for
any continuous function $\kappa(t)$, there exists a unique, smooth
solution to evolution equation (\ref{1.1}) on a maximal time
interval $[0,T_{\mathrm{max}})$. If additionally the following limit
exists and satisfies
 \begin{eqnarray*}
 \lim\limits_{t\rightarrow{T_{\mathrm{max}}}}\kappa(t)=\overline{\kappa} \qquad
 \mathrm{and} \qquad |\overline{\kappa}|<+\infty,
 \end{eqnarray*}
 then we have

 (I) If $\Xi=\infty$, then $T_{\mathrm{max}}<\infty$ and the flow
 (\ref{1.1}) converges uniformly to a point as
 $t\rightarrow{T_{\mathrm{max}}}$. Moreover, the normalized
 equation (\ref{2.6}) has a solution
 $\widetilde{X}(x,\widetilde{t})$ for all times
 $0\leq\widetilde{t}\leq\infty$, and its hypersurfaces
 $\widetilde{M}(x,\widetilde{t})=\widetilde{M}_{\widetilde{t}}$ converge to a round sphere of area $|M_0|$ in the
 $C^{\infty}$-topology as $\widetilde{t}\rightarrow\infty$.

 (II) If $0<\Xi<\infty$, then $T_{\mathrm{max}}=\infty$ and the
 solutions to (\ref{1.1}) converge uniformly to a sphere in the
 $C^{\infty}$-topology as $t\rightarrow\infty$.

 (III) If $\Xi=0$, then $\overline{\kappa}\geq0$ and
 $T_{\mathrm{max}}=\infty$. Moreover, if $\overline{\kappa}>0$, the
 solutions to (\ref{1.1}) expand uniformly to $\infty$ as
 $t\rightarrow\infty$, and the limit of the rescaled solutions to (\ref{2.6}) must be a round
sphere of total area $|M_0|$ if they converge to a smooth
hypersurface.
 \end{theorem}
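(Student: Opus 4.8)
The plan is to reduce the long-time behaviour of the forced flow (\ref{1.1}) to the classical contraction theorem for the mean curvature flow of convex hypersurfaces, exploiting the structural fact --- visible in (\ref{2.6}) --- that after the area-normalization the forcing coefficient $\kappa$ has disappeared entirely from the flow. For arbitrary continuous $\kappa$, short-time existence, uniqueness and the maximal interval $[0,T_{\mathrm{max}})$ are obtained in the usual way: since $M_0$ is strictly convex the hypersurfaces $M_t$ may be parametrized by their Gauss maps and (\ref{1.1}) rewritten as a single strictly parabolic scalar equation for the support function on $\mathbb{S}^{n}$, in which $\kappa(t)X$ enters only through a coefficient of order zero that is continuous in $t$; standard parabolic theory then yields a unique smooth solution, hence $T_{\mathrm{max}}>0$, with $T_{\mathrm{max}}$ characterized by the blow-up of $\max_{M_t}|A|^{2}$. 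That strict convexity persists up to $T_{\mathrm{max}}$ follows from Hamilton's maximum principle for tensors applied to the evolution equation (\ref{2.4}) for $h_{ij}$: the reaction term $\kappa(t)h_{ij}$ is a pointwise scalar multiple of $h_{ij}$ and so does not violate the null-eigenvector condition --- this is exactly \cite[Corollary~2.5]{lmw}.

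The heart of the argument is the normalized flow (\ref{2.6})--(\ref{2.7}). Because (\ref{2.6}) no longer contains $\kappa$, the normalized hypersurfaces $\widetilde M_{\widetilde t}$ evolve by a \emph{fixed} geometric flow, namely an area-normalized mean curvature flow of convex hypersurfaces perturbed by the radial term $\tfrac1n\widetilde h\,\widetilde X$. On this flow I would carry out Huisken's programme: (i) the pinching estimate, obtained from the maximum principle applied to $|\mathring h|^{2}/\widetilde H^{2}$ with $\mathring h_{ij}=\widetilde h_{ij}-\tfrac{\widetilde H}{n}\widetilde g_{ij}$, giving $|\mathring h|^{2}\le C\widetilde H^{\,2-\sigma}$ for some $\sigma>0$; (ii) uniform bounds, of every order, on the covariant derivatives of the second fundamental form; (iii) exponential decay of $|\mathring h|^{2}$ and of the oscillation of $\widetilde H$. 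The preserved convexity together with the fixed area $|\widetilde M_{\widetilde t}|=|M_0|$ keeps $\widetilde h=\int\widetilde H^{2}/\!\int 1$ uniformly bounded and positive, so the extra term $\tfrac1n\widetilde h\,\widetilde X$ contributes only lower-order quantities of controllable sign to these evolution inequalities and the classical estimates go through essentially verbatim. One concludes that the normalized flow exists for all $\widetilde t\in[0,\infty)$ --- equivalently $\int_{0}^{T_{\mathrm{max}}}\phi^{2}(\tau)\,d\tau=\infty$ --- and that $\widetilde M_{\widetilde t}$ converges in the $C^{\infty}$-topology to a round sphere of area $|M_0|$; this already disposes of the assertions concerning (\ref{2.6}) in all three cases and of the final sentence of (III).

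It then remains to read off the behaviour of (\ref{1.1}) from $\widetilde X=\phi(t)X$, the identity (\ref{2.5}), and the limit $\Xi=\lim_{T_i\to T_{\mathrm{max}}}\phi(T_i)$ from (\ref{2.8}); here the hypothesis $\kappa(t)\to\overline{\kappa}$ with $|\overline{\kappa}|<\infty$ enters. Since $|M_t|=\phi(t)^{-n}|M_0|$ and, by the previous paragraph, $M_t=\phi^{-1}\widetilde M_{\widetilde t}$ is $C^{\infty}$-close to a round sphere as $t\to T_{\mathrm{max}}$: if $\Xi=\infty$ then $\phi(T_i)\to\infty$, so $M_{T_i}$ is nearly round with area tending to $0$, hence has diameter tending to $0$ and collapses to a point --- and that this happens in finite time follows by feeding $\frac{d}{dt}|M_t|=\int_{M_t}(-H^{2}+n\kappa)\le -c\,|M_t|^{1-2/n}+C\,|M_t|$, where one uses the Minkowski/Cauchy--Schwarz lower bound $\int_{M_t}H^{2}\ge c\,|M_t|^{1-2/n}$ for convex $M_t$ together with $|\kappa|\le C$, into an ODE comparison, whence $|M_t|$ would reach $0$ at a finite time, forcing $T_{\mathrm{max}}<\infty$. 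If $0<\Xi<\infty$ then $\phi$ is bounded between positive constants, so $\int_0^{T_{\mathrm{max}}}\phi^{2}=\infty$ forces $T_{\mathrm{max}}=\infty$, and the uniform estimates on the normalized flow transfer to (\ref{1.1}), yielding $C^{\infty}$-convergence of $M_t$ to a genuine round sphere. If $\Xi=0$ then $\phi\to0$, so again $\int_0^{T_{\mathrm{max}}}\phi^{2}=\infty$ forces $T_{\mathrm{max}}=\infty$ and $|M_t|\to\infty$; moreover $\overline{\kappa}\ge0$, for if $\overline{\kappa}<0$ then $\kappa\le-\delta$ near $T_{\mathrm{max}}$ and (\ref{2.5}) gives $(\log\phi)'=\tfrac1n h-\kappa\ge\delta>0$, so $\phi$ would be bounded below near $T_{\mathrm{max}}$, contradicting $\phi\to0$; and when $\overline{\kappa}>0$ the same identity shows $M_t$ expands uniformly to infinity, while any smooth limit of the rescaled solutions must, by the previous paragraph, be the round sphere of area $|M_0|$.

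The step I expect to be the main obstacle is (i)--(iii) of the normalized-flow analysis: one must verify with care that each of the Huisken-type a priori estimates --- the pinching inequality, the higher-order curvature bounds, and the exponential convergence to the totally umbilical state --- survives the addition of the radial forcing term $\tfrac1n\widetilde h\,\widetilde X$, which is where essentially all of the work lies; a subsidiary technical point is to isolate precisely when $T_{\mathrm{max}}$ is finite as opposed to infinite purely from the value of $\Xi$, using only the boundedness (and not the sign) of $\kappa$.
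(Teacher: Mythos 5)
This theorem is not actually proved in the present paper: it is quoted verbatim from Li--Mao--Wu (reference \cite{lmw}, Theorem 1.1), so there is no ``paper's own proof'' to compare against directly. What the paper does say about the proof --- explicitly, in the last paragraph of Remark~\ref{remarkadd1} --- is that \emph{Huisken's $L^{p}$-estimate method alone does not suffice}, and that in \cite{lmw} the additional machinery of \cite{ban} and \cite{lis} is required. Your proposal, whose core is precisely to ``carry out Huisken's programme'' on the normalized flow, is therefore operating against an explicit caveat of the paper.

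Your central observation --- that the normalized equation (\ref{2.6}) contains no trace of $\kappa$ and is literally Huisken's area-normalized MCF --- is correct and is a genuinely nice reduction: by uniqueness the normalized hypersurfaces $\widetilde M_{\widetilde t}$ coincide (as long as they are defined) with the Huisken normalization of the unforced MCF starting at $M_0$. The gap is in the sentence ``One concludes that the normalized flow exists for all $\widetilde t\in[0,\infty)$ --- equivalently $\int_0^{T_{\mathrm{max}}}\phi^2\,d\tau=\infty$.'' This does \emph{not} follow from the a priori estimates on (\ref{2.6}). The normalized flow is defined only as a reparametrization of (\ref{1.1}) via $\widetilde t(t)=\int_0^t\phi^2$, so its domain is $[0,\widetilde T_{\mathrm{max}})$ with $\widetilde T_{\mathrm{max}}=\int_0^{T_{\mathrm{max}}}\phi^2$; Huisken-type curvature bounds prevent the normalized flow from becoming singular, but they cannot prevent $\widetilde T_{\mathrm{max}}$ from being \emph{finite}, in which case one is looking only at a proper initial segment of the globally defined Huisken solution and gets no convergence to a round sphere. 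Indeed, the theorem itself does not claim $\widetilde T_{\mathrm{max}}=\infty$ except in case (I); the guarded phrasing in case (III) (``\ldots must be a round sphere \emph{if} they converge to a smooth hypersurface'') is a strong hint that $\widetilde T_{\mathrm{max}}<\infty$ can occur there, contradicting your blanket assertion that ``this already disposes of the assertions concerning (\ref{2.6}) in all three cases.'' To make the argument go through you would need an independent proof that $\int_0^{T_{\mathrm{max}}}\phi^2=\infty$ in case (I) (and must not assert it in case (III)), which means confronting the coupled, $\kappa$-dependent ODE
\begin{equation*}
\frac{\phi'}{\phi}=\frac{1}{n}\widetilde h\bigl(\widetilde t(t)\bigr)\,\phi^2-\kappa(t)
\end{equation*}
directly, exactly the place where, according to the paper, the extra input from \cite{ban,lis} enters. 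Your ODE comparison giving $T_{\mathrm{max}}<\infty$ when $\Xi=\infty$ is sound (once one notes $\kappa$ is bounded on $[0,T_{\mathrm{max}})$ and uses the sequence $T_i$ to enter the absorbing region $\{|M_t|<\delta\}$), but the arguments for $T_{\mathrm{max}}=\infty$ in cases (II)--(III) implicitly assume $\phi$ has a full limit rather than merely a sequential limit $\Xi$, which is not granted by (\ref{2.8}) and also needs the same missing control on $\phi$.
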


 \begin{remark}\label{remarkadd1}
 \rm{ Here we want to reveal the difference between the flow (\ref{1.1})
 and the MCF by an example, through which readers can find that the flow
 (\ref{1.1}) is not a simple and trivial extension of the classical
 MCF. Now, if the $n$-dimensional initial hypersurface $M_{0}$ is a
 sphere with radius $r_{0}$, clearly, it can be represented by
 \begin{eqnarray*}
X_{0}(r_{0},\theta_{1},\ldots,\theta_{n}):=(r_{0}\cos(\theta_{1}),r_{0}\sin(\theta_{1})\cos(\theta_{2}),r_{0}\sin(\theta_{1})\sin(\theta_{2})
\cos(\theta_{3}),\ldots,\nonumber\\
r_{0}\sin(\theta_{1})\ldots{\sin}(\theta_{n-1})\cos(\theta_{n}),r_{0}\sin(\theta_{1})\ldots{\sin}(\theta_{n-1})\sin(\theta_{n})),
\end{eqnarray*}
where $r_{0}>0$ and
$(\theta_{1},\ldots,\theta_{n-1},\theta_{n})\in{\mathbb{S}^{n}}$.
Then the flow (\ref{1.1}) becomes
\begin{eqnarray} \label{keyinequality1}
\left\{
\begin{array}{ll}
\frac{\partial}{\partial{t}}r(t)=-\frac{n}{r(t)}+\kappa(t)r(t), \\
r(0)=r_{0}, & \quad
\end{array}
\right.
\end{eqnarray}
since in this case the evolving hypersurfaces $M_{t}$
($0<t<T_{\mathrm{max}}$) should be spheres under the flow
(\ref{1.1}) and can be represented by
 \begin{eqnarray*}
X_{t}(r_{0},\theta_{1},\ldots,\theta_{n}):=(r(t)\cos(\theta_{1}),r_{t}\sin(\theta_{1})\cos(\theta_{2}),r(t)\sin(\theta_{1})\sin(\theta_{2})
\cos(\theta_{3}),\ldots,\nonumber\\
r(t)\sin(\theta_{1})\ldots{\sin}(\theta_{n-1})\cos(\theta_{n}),r(t)\sin(\theta_{1})\ldots{\sin}(\theta_{n-1})\sin(\theta_{n})).
\end{eqnarray*}
In fact, the assertion that $M_{t}$ ($0<t<T_{\mathrm{max}}$) is a
sphere can be obtained by the fact that the flow (\ref{1.1}) can
preserve the property of being totally umbilical, i.e.
$h_{ij}=Hg_{ij}/n$ (cf. Lemma \ref{lemma1}).  The first equation of
(\ref{keyinequality1}) is a Bernoullie equation, and by direct
computation, we can get
 \begin{eqnarray} \label{solution1}
r(t)=\left(r_{0}^{2}-2n\int_{0}^{t}e^{-2\int_{0}^{\tau}\kappa(\xi)d\xi}d\tau\right)^{1/2}\cdot{e^{\int_{0}^{t}\kappa(\tau)d\tau}}.
\end{eqnarray}
Clearly, from (\ref{solution1}) we know that the contraction or
expansion of $M_{t}$ depends on $\kappa(t)$ and $r_{0}$, and we can
also get information of $T_{\mathrm{max}}$ by considering the first
zero-point (if exists) of the function
$r_{0}^{2}-2n\int_{0}^{t}e^{-2\int_{0}^{\tau}\kappa(\xi)d\xi}d\tau$.
More precisely, if there exists some $t_{0}<+\infty$ such that
$r_{0}^{2}/2n=\int_{0}^{t_{0}}e^{-2\int_{0}^{\tau}\kappa(\xi)d\xi}d\tau$,
then we have $T_{\mathrm{max}}=t_{0}$, i.e. $M_{t}$ contracts to a
single point at $t_0$; if there does not exist, then
$T_{\mathrm{max}}=+\infty$, i.e. $M_{t}$ expands to infinity. In
order to let readers realize this clearly, we would like to
investigate several different $\kappa(t)$ which let the flow
(\ref{1.1}) have different behaviors. For instance, if we choose
$\kappa(t)=1/(t+1)$, then by (\ref{solution1}) we have
\begin{eqnarray*}
r(t)=\left(r_{0}^{2}-2n+\frac{2n}{t+1}\right)^{1/2}\cdot(t+1).
\end{eqnarray*}
Clearly, if $0<r_{0}<\sqrt{2n}$, then
$T_{\mathrm{max}}=r_{0}^{2}/(2n-r_{0}^{2})<\infty$, and $M_{t}$
contracts to a single point as $t\rightarrow T_{\mathrm{max}}$; if
$\sqrt{2n}\leq{r_{0}}<\infty$, then $T_{\mathrm{max}}=+\infty$, and
$M_{t}$ expands uniformly to $\infty$ as $t\rightarrow\infty$. If we
choose $\kappa(t)=-1/(t+1)$, then by (\ref{solution1}) we have
\begin{eqnarray*}
r(t)=\left[r_{0}^{2}-2n\frac{(t+1)^3}{3}+\frac{2n}{3}\right]^{1/2}\cdot\frac{1}{t+1}.
\end{eqnarray*}
Clearly, no matter how much $r_{0}$ is, $M_{t}$ contracts to a
single point as $t\rightarrow T_{\mathrm{max}}$ and
$T_{\mathrm{max}}=\sqrt[3]{1+\frac{3r_{0}^{2}}{2n}}-1<+\infty$. From
these two examples, we know that different $\kappa(t)$ might let the
flow (\ref{1.1}) have different behaviors (i.e. contraction and
expansion are all possible). However, Huisken \cite{Hus} proved that
an $n$-dimensional smooth, compact and strictly convex hypersurface
immersed in $\mathbb{R}^{n+1}$ with $n\geq2$ evolves under the MCF
\emph{would only contract to single point at a finite time}. In
fact, if $M_{0}$ is a sphere which can be represented as above, then
the MCF should become
\begin{eqnarray*}
\left\{
\begin{array}{ll}
\frac{\partial}{\partial{t}}r(t)=-\frac{n}{r(t)}, \\
r(0)=r_{0}. & \quad
\end{array}
\right.
\end{eqnarray*}
So, $r(t)=\sqrt{r_{0}^{2}-2nt}$ and the maximal time is
$T_{\mathrm{max}}=\frac{r_{0}^{2}}{2n}$. Clearly, even in this
special setting (i.e. the initial hypersurface is a sphere), the
situation of our flow (\ref{1.1}) is more complicated than that of
the MCF. Hence, the flow (\ref{1.1}) cannot be seen as a simple
extension of the MCF. From the above argument, one can realize that
one needs to study the function $\kappa(t)$ and might also (if
needed) the diameter (or equivalently, the mean curvature) of the
initial hypersurface if he or she wants to investigate behaviors of
the evolving hypersurfaces under the flow (\ref{1.1}), and this
difficulty has been solved in \cite{lmw} by successfully finding a
breakthrough, i.e. discussing the limit $\Xi$ determined by
(\ref{2.8}), which in essence has relation with $\kappa(t)$ and the
mean curvature of the initial hypersurface. However, in the case of
the classical MCF, this problem does not exist. One cannot get
Theorem \ref{theoremmain} \emph{only} by applying Huisken's method
(i.e. $L^{p}$-estimate) in \cite{Hus}. In fact, to prove Theorem
\ref{theoremmain}, except the $L^{p}$-estimate tool, one might also
have to use other tools introduced in \cite{ban,lis} (see \cite{lmw}
for the details).

However, the above process might only works for this special case
(i.e. the initial hypersurface is a sphere) in which we can compute
$X_{t}$ directly. Actually, even in this special case when
$\kappa(t)$ is complicated, for instance, choose
$\kappa(t)=\sqrt{1+\frac{1}{t+4}\sqrt{\frac{1}{t+3}\sqrt{\frac{1}{t+2}\sqrt{\frac{1}{t+1}}}}}$,
then it is not easy to compute directly. Of course, in this case, we
might get the numerical value of $T_{\mathrm{max}}=t_{0}<\infty$ (if
exits) by software once $r_0$ and $n$ are given. Therefore, \emph{it
should be interesting to know how $M_{t}$ behaves and
$T_{\mathrm{max}}$ once $\kappa(t)$ is given and the initial
hypersurface $M_{0}$ is not so special as above}. Theorem
\ref{theoremmain} can supply us this possibility. In fact, if
$\kappa(t)$ is given, then the rescaled factor $\phi(t)$ might be
solved by (\ref{2.5}) (if feasible), and then applying Theorem
\ref{theoremmain} the behavior of $M_{t}$ and the information of
$T_{\mathrm{max}}$ can be known. }
 \end{remark}

\section{Evolution equations for the first eigenvalues of the Laplace and the $p$-Laplace operators}
\renewcommand{\thesection}{\arabic{section}}
\renewcommand{\theequation}{\thesection.\arabic{equation}}
\setcounter{equation}{0} \setcounter{maintheorem}{0}

In this section, based on the evolution equations mentioned in
Section 2, we would like to derive evolution equations for the first
eigenvalues of the Laplacian and the $p$-Laplacian as follows.

\begin{theorem} \label{theorem1}
Let $\lambda_{1}(t)$ be the first non-zero closed eigenvalue of the
Laplacian on an $n$-dimensional compact and strictly convex
hypersurface $M_{t}$ which evolves by the forced MCF (\ref{1.1}),
and let $u$ be the normalized eigenfunction corresponding to
$\lambda_{1}$, i.e. $-\Delta{u}=\lambda_{1}u$ and
$\int_{M_t}u^{2}=1$. Then we have
\begin{eqnarray} \label{1.2}
\frac{d}{dt}\lambda_{1}(t)=-2\lambda_{1}\kappa(t)+2\int_{M_t}Hh^{ij}\nabla_{i}u\nabla_{j}u+2\int_{M_t}uH\nabla_{i}h^{ij}\nabla_{j}u.
\end{eqnarray}
Similarly, under the normalized flow (\ref{2.6}), we have
\begin{eqnarray*}
\frac{d}{d\tilde{t}}\widetilde{\lambda}_{1}(\tilde{t})=-\frac{2\widetilde{h}}{n}\cdot\widetilde{\lambda}_{1}(\tilde{t})+2\int_{\widetilde{M}_{\tilde{t}}}\widetilde{H}\cdot\widetilde{h}^{ij}\nabla_{i}u\nabla_{j}u
+2\int_{\widetilde{M}_{\tilde{t}}}u\widetilde{H}\nabla_{i}\widetilde{h}^{ij}\nabla_{j}u,
\end{eqnarray*}
where $\widetilde{\lambda}_{1}(\tilde{t})$ is the first non-zero
closed eigenvalue of the Laplacian on the rescaled hypersurface
$\widetilde{M}_{\tilde{t}}$.
\end{theorem}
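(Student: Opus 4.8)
The plan is to differentiate the variational expression for $\lambda_{1}$ directly. Since $\int_{M_t}u^{2}=1$ we may write $\lambda_{1}(t)=-\int_{M_t}u\Delta u=\int_{M_t}g^{ij}\nabla_{i}u\,\nabla_{j}u$, so I first record the $t$-derivatives of the geometric ingredients. Differentiating $g^{ik}g_{kj}=\delta^{i}_{j}$ together with (\ref{2.3}) gives $\partial_{t}g^{ij}=2Hh^{ij}-2\kappa(t)g^{ij}$, while taking the $g$-trace of (\ref{2.3}) (using $g^{ij}h_{ij}=H$ and $g^{ij}g_{ij}=n$) gives $\partial_{t}(d\mu_{t})=\bigl(n\kappa(t)-H^{2}\bigr)d\mu_{t}$. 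As is standard in this circle of problems, I take $u(\cdot,t)$ to be a $C^{1}$ family of eigenfunctions normalized by $\int_{M_t}u^{2}\,d\mu_{t}=1$; since the identity to be proved is pointwise in $t$, this suffices. (I will also use, for a scalar, that $\partial_{t}(\nabla_{i}u)=\nabla_{i}(\partial_{t}u)$.)

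Next I differentiate $\lambda_{1}(t)=\int_{M_t}g^{ij}\nabla_{i}u\,\nabla_{j}u\,d\mu_{t}$, producing three contributions: (i) from $\partial_{t}g^{ij}$, namely $2\int_{M_t}Hh^{ij}\nabla_{i}u\nabla_{j}u-2\kappa\lambda_{1}$ (using $\int_{M_t}|\nabla u|^{2}=\lambda_{1}$); (ii) from $\partial_{t}u$, namely $2\int_{M_t}g^{ij}\nabla_{i}(\partial_{t}u)\nabla_{j}u=-2\int_{M_t}(\partial_{t}u)\Delta u=2\lambda_{1}\int_{M_t}u\,\partial_{t}u$; and (iii) from $\partial_{t}(d\mu_{t})$, namely $n\kappa\lambda_{1}-\int_{M_t}H^{2}|\nabla u|^{2}$. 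The term $\int_{M_t}u\,\partial_{t}u$ is eliminated by differentiating the normalization $\int_{M_t}u^{2}\,d\mu_{t}=1$, which gives $\int_{M_t}u\,\partial_{t}u=\tfrac12\int_{M_t}u^{2}(H^{2}-n\kappa)$, so (ii) becomes $\lambda_{1}\int_{M_t}u^{2}H^{2}-n\kappa\lambda_{1}$. Adding (i)+(ii)+(iii), the $n\kappa\lambda_{1}$ terms cancel and one is left with
\[
\frac{d}{dt}\lambda_{1}(t)=-2\kappa(t)\lambda_{1}+2\int_{M_t}Hh^{ij}\nabla_{i}u\nabla_{j}u+\lambda_{1}\int_{M_t}u^{2}H^{2}-\int_{M_t}H^{2}|\nabla u|^{2}.
\]

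It remains to convert the last two terms into the stated divergence term. Using $\lambda_{1}u^{2}=-u\Delta u$ and integrating by parts on the closed manifold $M_t$, $\lambda_{1}\int_{M_t}u^{2}H^{2}=\int_{M_t}g^{ij}\nabla_{i}u\,\nabla_{j}(uH^{2})=\int_{M_t}H^{2}|\nabla u|^{2}+2\int_{M_t}uH\,g^{ij}\nabla_{i}u\,\nabla_{j}H$, so the difference collapses to $2\int_{M_t}uH\,g^{ij}\nabla_{i}u\,\nabla_{j}H$. Finally, because $M_{t}$ is a hypersurface of $\mathbb{R}^{n+1}$, the Codazzi equations make $\nabla h$ a totally symmetric $3$-tensor; tracing gives $\nabla_{i}h^{ij}=g^{jk}\nabla_{k}H$, hence $g^{ij}\nabla_{i}u\,\nabla_{j}H=(\nabla_{i}h^{ij})\nabla_{j}u$, and (\ref{1.2}) follows. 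For the normalized flow one repeats the identical computation, now using (\ref{2.7}): this amounts to replacing $\kappa(t)$ by $\widetilde h/n$ and $(H,h_{ij})$ by $(\widetilde H,\widetilde h_{ij})$, and since Codazzi also holds for $\widetilde M_{\widetilde t}\subset\mathbb{R}^{n+1}$, the same argument yields the stated evolution equation for $\widetilde\lambda_{1}(\widetilde t)$.

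The only genuinely delicate point is the differentiability of $t\mapsto\lambda_{1}(t)$ and the existence of a smooth family of normalized eigenfunctions, since the multiplicity of $\lambda_{1}$ may jump along the flow; this is handled exactly as in the references (Cao, Li, Zhao), e.g.\ by restricting to times where $\lambda_{1}$ is simple and arguing by continuity, or by differentiating a fixed smooth extension of the eigenfunction and comparing one-sided limits. Every other step is a routine integration by parts, so no further obstacle is expected.
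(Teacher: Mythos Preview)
Your argument is correct, but it proceeds along a genuinely different path from the paper's own proof. The paper differentiates the pointwise eigenvalue equation $-\Delta u=\lambda_{1}u$, computes $\partial_{t}(\Delta u)$ by tracking the evolution of $g^{ij}$ \emph{and} of the Christoffel symbols $\Gamma^{m}_{ij}$ under (\ref{2.3}), and then integrates against $u$; in that route the volume-form variation never enters, the $u_{t}$-terms cancel via $\int u\,\Delta u_{t}=\int(\Delta u)u_{t}=-\lambda_{1}\int u\,u_{t}$, and the term $\nabla_{i}h^{ij}$ drops out directly from an integration by parts of $\int u\,Hh^{ij}\nabla_{i}\nabla_{j}u$ against the $\nabla_{i}H\cdot h^{im}\nabla_{m}u$ piece coming from $\partial_{t}\Gamma^{m}_{ij}$, so Codazzi is never invoked. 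You instead differentiate the Dirichlet energy $\lambda_{1}=\int|\nabla u|^{2}\,d\mu_{t}$: this forces you to track $\partial_{t}(d\mu_{t})$ and to eliminate $\int u\,u_{t}$ via the normalization, but spares you the Christoffel computation; the price is that you land on $2\int uH\langle\nabla u,\nabla H\rangle$ and must appeal to Codazzi ($\nabla_{i}h^{ij}=\nabla^{j}H$ for hypersurfaces of $\mathbb{R}^{n+1}$) to rewrite it in the form stated in (\ref{1.2}). Each approach trades one bookkeeping task for another; yours is arguably shorter and more variational in spirit, while the paper's obtains the $\nabla_{i}h^{ij}$ term intrinsically without using the ambient Euclidean structure. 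Your closing remark on the differentiability of $\lambda_{1}(t)$ is also more cautious than the paper, which simply takes it for granted.
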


\begin{proof}
Let $u$ be an eigenfunction of the first non-zero closed
eigenvalue $\lambda_1$ of $\Delta$ on the evolving compact
hypersurface $M_t$. For simplicity, we normalize the function $u$,
i.e. $\int_{M_t}u^{2}=1$. By (\ref{2.1}), we know that $u$ also
satisfies
\begin{eqnarray*}
-\Delta{u}=\lambda_1{u},  \qquad \mathrm{where} \quad \int_{M_t}u=0.
\end{eqnarray*}
Clearly, we have
\begin{eqnarray} \label{3.1}
-\frac{\partial}{\partial{t}}\left(\Delta{u}\right)=\left(\frac{d}{dt}\lambda_{1}\right)u+\lambda_{1}\frac{\partial{u}}{\partial{t}}
\end{eqnarray}
by taking derivatives with respect to $t$ for the above equation. By
multiplying $u$ to both sides of (\ref{3.1}) and then integrating
over $M_t$, we have
\begin{eqnarray*}
-\int_{M_t}u\frac{\partial}{\partial{t}}\left(\Delta{u}\right)=
\left(\frac{d}{dt}\lambda_{1}\right)\int_{M_t}u^{2}+\lambda_{1}\int_{M_t}u\frac{\partial{u}}{\partial{t}}.
\end{eqnarray*}
Therefore, we can obtain
\begin{eqnarray} \label{3.2}
\frac{d}{dt}\lambda_{1}=-\int_{M_t}u\frac{\partial}{\partial{t}}\left(\Delta{u}\right)-\lambda_{1}\int_{M_t}u\frac{\partial{u}}{\partial{t}}.
\end{eqnarray}
Hence, if we want to get the evolution equation of $\lambda_{1}$, we
need to derive the evolution equation of $\Delta{u}$ under the flow
(\ref{1.1}). First, by (\ref{2.3}) we have
\begin{eqnarray*}
\frac{\partial}{\partial{t}}g^{ij}=-g^{im}\left(\frac{\partial}{\partial{t}}g_{mq}\right)g^{qj}
=2g^{im}\left[Hh_{mq}-\kappa(t)g_{mq}\right]g^{qj}=2Hg^{im}h_{mq}g^{qj}-2\kappa(t)g^{ij},
\end{eqnarray*}
which implies
\begin{eqnarray}  \label{3.3}
\frac{\partial}{\partial{t}}(\Delta{u})&=&\frac{\partial}{\partial{t}}\left(g^{ij}\nabla_{i}\nabla_{j}u\right)\nonumber\\
&=&\frac{\partial}{\partial{t}}(g^{ij})\nabla_{i}\nabla_{j}u+g^{ij}\frac{\partial}{\partial{t}}(\nabla_{i}\nabla_{j}u)\nonumber\\
&=&2\left[Hg^{im}h_{mq}g^{qj}-\kappa(t)g^{ij}\right]\nabla_{i}\nabla_{j}u+g^{ij}\frac{\partial}{\partial{t}}
\left(\frac{\partial^{2}u}{\partial{x_i}\partial{x_j}}-\Gamma^{m}_{ij}\frac{\partial{u}}{\partial{x_m}}\right)\nonumber\\
&=&2Hg^{im}h_{mq}g^{qj}\nabla_{i}\nabla_{j}u-2\kappa(t)\Delta{u}+\Delta\frac{\partial{u}}{\partial{t}}-g^{ij}
\frac{\partial}{\partial{t}}\left(\Gamma^{m}_{ij}\right)\frac{\partial{u}}{\partial{x_m}}.
\end{eqnarray}
On the other hand, we have
\begin{eqnarray*}
g^{ij}\frac{\partial}{\partial{t}}\left(\Gamma^{m}_{ij}\right)&=&\frac{1}{2}g^{ij}g^{ml}
\left(\nabla_{i}\frac{\partial{g_{jl}}}{\partial{t}}+\nabla_{j}\frac{\partial{g_{il}}}{\partial{t}}-
\nabla_{l}\frac{\partial{g_{ij}}}{\partial{t}}\right)\\
&=&\frac{1}{2}g^{ij}g^{ml}
\Big{\{}\nabla_{i}\left[-2Hh_{jl}+2\kappa(t)g_{jl}\right]+\nabla_{j}\left[-2Hh_{il}+2\kappa(t)g_{il}\right]-\\
&& \qquad
\nabla_{l}\left[-2Hh_{ij}+2\kappa(t)g_{ij}\right]\Big{\}}\\
&=&-2\nabla_{i}H\cdot{g^{ij}}g^{ml}h_{jl}.
\end{eqnarray*}
Substituting the above equality into (\ref{3.3}) results in
 \begin{eqnarray} \label{3.4}
\frac{\partial}{\partial{t}}(\Delta{u})=2Hg^{im}h_{mq}g^{qj}\nabla_{i}\nabla_{j}u-2\kappa(t)\Delta{u}+\Delta\frac{\partial{u}}{\partial{t}}+2\nabla_{i}H\cdot{g^{ij}}g^{ml}h_{jl}\frac{\partial{u}}{\partial{x_m}}.
 \end{eqnarray}
 By substituting (\ref{3.4}) into (\ref{3.2}), and then integrating by parts, we have
 \begin{eqnarray} \label{3.5}
 \frac{d}{dt}\lambda_{1}&=&-\int_{M_t}u\left[2Hg^{im}h_{mq}g^{qj}\nabla_{i}\nabla_{j}u-2\kappa(t)\Delta{u}+\Delta\frac{\partial{u}}{\partial{t}}+2\nabla_{i}H\cdot{g^{ij}}g^{ml}h_{jl}\nabla_{m}u\right]-\lambda_{1}\int_{M_t}u\frac{\partial{u}}{\partial{t}}\nonumber\\
 &=&2\int_{M_t}Hh^{ij}\nabla_{i}u\nabla_{j}u-2\lambda_{1}\kappa(t)-\int_{M_t}u\left(\Delta\frac{\partial{u}}{\partial{t}}\right)-\lambda_{1}\int_{M_t}u\frac{\partial{u}}{\partial{t}}\nonumber\\
 &=&-2\lambda_{1}\kappa(t)+2\int_{M_t}Hh^{ij}\nabla_{i}u\nabla_{j}u+2\int_{M_t}uH\nabla_{i}h^{ij}\nabla_{j}u,
\end{eqnarray}
where $h^{ij}=g^{im}h_{mq}g^{qj}$. Here the last equality in
(\ref{3.5}) holds since
\begin{eqnarray*}
\int_{M_t}u\left(\Delta\frac{\partial{u}}{\partial{t}}\right)=\int_{M_t}\Delta{u}\frac{\partial{u}}{\partial{t}}
=-\lambda_{1}\int_{M_t}u\frac{\partial{u}}{\partial{t}}.
\end{eqnarray*}
This completes the proof of (\ref{1.2}).

Similarly, under the normalized flow (\ref{2.6}), we can obtain
\begin{eqnarray*}
\frac{d}{d\tilde{t}}\lambda_{1}(\tilde{t})=-\frac{2\widetilde{h}}{n}\cdot\lambda_{1}(\tilde{t})+2\int_{\widetilde{M}_{\tilde{t}}}\widetilde{H}\cdot\widetilde{h}^{ij}\nabla_{i}u\nabla_{j}u+2\int_{\widetilde{M}_{\tilde{t}}}u\widetilde{H}\nabla_{i}\widetilde{h}^{ij}\nabla_{j}u,
\end{eqnarray*}
since the evolution equations (\ref{2.3}) and (\ref{2.7})
\emph{almost}
 have the same form except the function $\kappa(t)$ replaced by
 $\widetilde{h}/n$ with
 $\widetilde{h}=\phi^{-2}h=\int_{\widetilde{M_{\widetilde{t}}}}\widetilde{H}^{2}/\int_{\widetilde{M_{\widetilde{t}}}}$.
 \end{proof}

\begin{remark}
\rm{ Here we want to emphasize one thing, that is, we need to
require that $M_t$ should be compact on a prescribed time interval,
since the compactness of $M_t$ can assure the existence of the
eigenvalues of the Laplace and the $p$-Laplace operators. This
implies that it cannot be avoided investigating the evolving
behavior of the forced flow (\ref{1.1}). In fact, by Theorem
\ref{theoremmain}, we know that it is feasible to consider the
evolution equation (\ref{1.2}) of the first nonzero closed
eigenvalue of the Laplace operator on $[0,T_{\mathrm{m}})$ with
$T_{\mathrm{m}}$ defined by
\begin{eqnarray}  \label{key}
T_{\mathrm{m}}=\left\{
\begin{array}{lll}
T_{\mathrm{max}}, \qquad &\mathrm{if}~~0<\Xi\leq\infty, \\
T<T_{\mathrm{max}}, \qquad &\mathrm{if}~~\Xi=0,
\end{array}
\right.
\end{eqnarray}
where $\Xi$ is the limit given by (\ref{2.8}) and
$[0,T_{\mathrm{max}})$ corresponds to the maximal time interval of
the flow (\ref{1.1}). Clearly, on $[0,T_{\mathrm{m}})$, the evolving
hypersurface $M_{t}$ is compact. }
\end{remark}

In the case of the $p$-Laplace operator, since we do not know
whether the first nonzero closed eigenvalue $\lambda_{1,p}(t)$ of
$\Delta_{p}$ is differentiable under the forced flow (\ref{1.1}) or
not, it seems like that we cannot use a similar method to that of
the proof of Theorem \ref{theorem1}. However, in fact, we can use a
similar method to the one in \cite{caox,caox2} to avoid discussing
the differentiation of $\lambda_{1,p}(t)$ under the flow
(\ref{1.1}). More precisely, on the time interval
$[0,T_{\mathrm{m}})$ where the flow (\ref{1.1}) exists and $M_t$ is
compact, we can define a smooth function $\lambda_{1,p}(u,t)$ as
follows
\begin{eqnarray} \label{3.6}
\lambda_{1,p}(u,t):=-\int_{M_t}\Delta_{p}u(x,t)\cdot{u(x,t)}dv_{t}=\int_{M_t}|\nabla{u}|^{p}dv_{t},
\end{eqnarray}
where $u(x,t)$ is an arbitrary smooth function satisfying
\begin{eqnarray}  \label{3.7}
\int_{M_t}|u(x,t)|^{p}=1  \qquad \mathrm{and} \quad
\int_{M_t}|u(x,t)|^{p-2}u(x,t)=0.
\end{eqnarray}
Clearly, for any $t\in[0,T_{\mathrm{m}})$, if, furthermore, $u(x,t)$
is the eigenfunction of the first eigenvalue $\lambda_{1,p}(t)$,
then, by (\ref{3.7}), we have
\begin{eqnarray*}
\lambda_{1,p}(u,t)=-\int_{M_t}\Delta_{p}u(x,t)\cdot{u(x,t)}=\lambda_{1,p}(t)\int_{M_t}|u(x,t)|^{p}=\lambda_{1,p}(t).
\end{eqnarray*}

Now, by using the function $\lambda_{1,p}(u,t)$ defined by
(\ref{3.6}), we can prove the following result.

\begin{theorem} \label{theorem2}
Let $\lambda_{1,p}(t)$ be the first non-zero closed eigenvalue of
the $p$-Laplacian ($1<p<\infty$) on an $n$-dimensional compact and
strictly convex hypersurface $M_{t}$ which evolves by the forced MCF
(\ref{1.1}), and let $u$ be the eigenfunction of $\lambda_{1,p}(t)$
at time $t\in[0,T_{\mathrm{m}})$ satisfying $\int_{M_t}u^{p}=1$,
where $T_{\mathrm{m}}$ is defined by (\ref{key}). Let
$\lambda_{1,p}(u,t)$ be the smooth function defined by (\ref{3.6}).
Then at time $t$ we have
\begin{eqnarray} \label{1.3}
\frac{d}{dt}\lambda_{1,p}(u,t)=-p\kappa(t)\lambda_{1,p}(t)+p\int_{M_t}|\nabla{u}|^{p-2}Hh^{ij}\nabla_{i}u\cdot\nabla_{j}u+
2\int_{M_t}|\nabla{u}|^{p-2}uH\nabla_{i}h^{ij}\nabla_{j}u.
\end{eqnarray}
Similarly, under the normalized flow (\ref{2.6}), we have
\begin{eqnarray*}
\frac{d}{d\tilde{t}}\widetilde{\lambda}_{1,p}(u,\tilde{t})=-\frac{p\widetilde{h}}{n}\cdot\widetilde{\lambda}_{1,p}(\tilde{t})+p\int_{\widetilde{M}_{\tilde{t}}}|\nabla{u}|^{p-2}\widetilde{H}\cdot\widetilde{h}^{ij}\nabla_{i}u\cdot\nabla_{j}u+
2\int_{\widetilde{M}_{\tilde{t}}}|\nabla{u}|^{p-2}u\widetilde{H}\nabla_{i}\widetilde{h}^{ij}\nabla_{j}u
\end{eqnarray*}
at time $\tilde{t}\in[0,\widetilde{T}_{\mathrm{m}})$. Here
$\widetilde{T}_{\mathrm{m}}:=\int_{0}^{T_{\mathrm{m}}}\phi^{2}(s)ds$
with $\phi(t)$ the rescaled factor determined by (\ref{2.5}).
Moreover, $\widetilde{\lambda}_{1,p}(\tilde{t})$ is the first
nonzero closed eigenvalue of the $p$-Laplacian on the rescaled
hypersurface $\widetilde{M}_{\tilde{t}}$, and
$\widetilde{\lambda}_{1,p}(u,\tilde{t})$ is a smooth function
defined by
\begin{eqnarray*}
\widetilde{\lambda}_{1,p}(u,\tilde{t}):=-\int_{\widetilde{M}_{\tilde{t}}}\Delta_{p}u(x,\tilde{t})\cdot{u(x,\tilde{t})},
\end{eqnarray*}
where $u(x,\tilde{t})$ is an arbitrary smooth function satisfying
\begin{eqnarray*}
\int_{\widetilde{M}_{\tilde{t}}}|u(x,\tilde{t})|^{p}=1  \qquad
\mathrm{and} \quad
\int_{\widetilde{M}_{\tilde{t}}}|u(x,\tilde{t})|^{p-2}u(x,\tilde{t})=0.
\end{eqnarray*}
\end{theorem}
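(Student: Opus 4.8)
The plan is to follow the scheme of the proof of Theorem~\ref{theorem1}, the one essential change being that, since $\lambda_{1,p}(t)$ is not known to be differentiable, one differentiates the auxiliary smooth quantity $\lambda_{1,p}(u,t)$ of (\ref{3.6}) in its place. Fix $t\in[0,T_{\mathrm{m}})$, let $u(\cdot,t)$ be the eigenfunction of $\lambda_{1,p}(t)$ normalized by $\int_{M_t}|u|^{p}=1$, and regard it as one member of a smooth family $u(\cdot,s)$, for $s$ near $t$, that satisfies both side conditions of (\ref{3.7}) for every $s$ (such a family exists: (\ref{3.7}) cuts out a codimension-two set and can be maintained by a two-parameter correction; the eigenfunction equation is imposed only at the instant $s=t$). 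Writing $\lambda_{1,p}(u,s)=\int_{M_s}|\nabla u|^{p}\,dv_s$ and differentiating under the integral, I use the metric evolution (\ref{2.3}), its consequence $\partial_s g^{ij}=2Hh^{ij}-2\kappa(s)g^{ij}$ from just before (\ref{3.3}), and $\partial_s(dv_s)=\bigl(n\kappa(s)-H^{2}\bigr)\,dv_s$. Since $|\nabla u|^{p}=(g^{ij}\nabla_i u\,\nabla_j u)^{p/2}$, the chain rule gives, at $s=t$,
\[
\frac{d}{ds}\lambda_{1,p}(u,s)=p\int_{M_t}|\nabla u|^{p-2}Hh^{ij}\nabla_i u\,\nabla_j u-p\kappa(t)\lambda_{1,p}(t)+p\int_{M_t}|\nabla u|^{p-2}g^{ij}\nabla_i u\,\nabla_j\dot u+\int_{M_t}|\nabla u|^{p}\bigl(n\kappa(t)-H^{2}\bigr),
\]
where $\dot u=\partial_s u$ and we used $\int_{M_t}|\nabla u|^{p}=\lambda_{1,p}(t)$.

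The next step is to dispose of the two ``unwanted'' terms, the $\dot u$-integral and the measure term. Integration by parts rewrites the $\dot u$-integral as $-p\int_{M_t}\dot u\,\Delta_p u$, and the eigenvalue equation $\Delta_p u=-\lambda_{1,p}(t)|u|^{p-2}u$ (which holds at $s=t$) turns it into $p\lambda_{1,p}(t)\int_{M_t}|u|^{p-2}u\,\dot u$. Differentiating the constraint $\int_{M_s}|u|^{p}=1$ at $s=t$ gives $p\int_{M_t}|u|^{p-2}u\,\dot u=\int_{M_t}|u|^{p}\bigl(H^{2}-n\kappa(t)\bigr)$, so the $\dot u$-integral equals $\lambda_{1,p}(t)\int_{M_t}|u|^{p}\bigl(H^{2}-n\kappa(t)\bigr)$; adding the measure term collapses the pair to $\int_{M_t}\bigl(H^{2}-n\kappa(t)\bigr)\bigl(\lambda_{1,p}(t)|u|^{p}-|\nabla u|^{p}\bigr)$. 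Its $\kappa(t)$-part vanishes because $\int_{M_t}|u|^{p}=1$ and $\int_{M_t}|\nabla u|^{p}=\lambda_{1,p}(t)$, leaving just $\int_{M_t}H^{2}\bigl(\lambda_{1,p}(t)|u|^{p}-|\nabla u|^{p}\bigr)$.

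To finish, I use $\lambda_{1,p}(t)|u|^{p}=-u\,\Delta_p u=-u\,\mathrm{div}\bigl(|\nabla u|^{p-2}\nabla u\bigr)$ and integrate by parts:
\[
\int_{M_t}H^{2}\lambda_{1,p}(t)|u|^{p}=\int_{M_t}|\nabla u|^{p-2}\langle\nabla(H^{2}u),\nabla u\rangle=2\int_{M_t}|\nabla u|^{p-2}Hu\,\nabla_i H\,\nabla^i u+\int_{M_t}H^{2}|\nabla u|^{p},
\]
so what remains is $2\int_{M_t}|\nabla u|^{p-2}uH\,\nabla_i H\,\nabla^i u$. The Codazzi equation for the hypersurface $M_t\subset\mathbb{R}^{n+1}$, i.e. $\nabla_i h^{ij}=\nabla^{j}H$, rewrites this as $2\int_{M_t}|\nabla u|^{p-2}uH\,\nabla_i h^{ij}\nabla_j u$, which establishes (\ref{1.3}). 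The normalized version requires no additional work: by (\ref{2.7}) the rescaled metric satisfies $\partial_{\tilde t}\widetilde g_{ij}=\tfrac{2}{n}\widetilde h\,\widetilde g_{ij}-2\widetilde H\widetilde h_{ij}$, which is (\ref{2.3}) with $\kappa(t)$ replaced by $\widetilde h/n$, and since $\widetilde M_{\tilde t}$ is again a hypersurface of $\mathbb{R}^{n+1}$ (so Codazzi still holds) the identical computation on $\widetilde M_{\tilde t}$ yields the claimed formula.

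The point that demands the most care is the handling of the $\dot u$-terms: $\frac{d}{ds}\lambda_{1,p}(u,s)$ at $s=t$ must come out independent of the chosen extension of the eigenfunction, and this is exactly what the normalization $\int_{M_s}|u|^{p}=1$ guarantees — the difference $v$ of two admissible variations obeys $\int_{M_t}|u|^{p-2}u\,v=0$, so after the integration by parts and the use of the eigenvalue equation its contribution cancels. This is the nonlinear analogue of the role played by $\int_{M_t}u^{2}=1$ in the proof of Theorem~\ref{theorem1}, and it is the reason the auxiliary functional (\ref{3.6})--(\ref{3.7}) is the right object to differentiate.
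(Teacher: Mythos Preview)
Your proof is correct and reaches (\ref{1.3}) by a route that differs in organization from the paper's. The paper differentiates the weak form $-\int_{M_t} u\,\Delta_p u\,dv_t$, which forces it to compute $\partial_t(\Delta u)$ and $g^{ij}\partial_t\Gamma^m_{ij}$ (equations (\ref{3.3})--(\ref{3.4})) and to track the evolution of $B=|\nabla u|^{p-2}$ separately; the $\nabla_i h^{ij}$ term then emerges from a cancellation between the integrated-by-parts first term of (\ref{3.9}) and the $\nabla_i H\cdot h^{im}$ contribution coming from $\partial_t\Gamma$ --- a cancellation that is itself a disguised use of Codazzi. You instead differentiate the energy form $\int_{M_t}|\nabla u|^{p}\,dv_t$ directly, which needs only $\partial_t g^{ij}$ and $\partial_t(dv_t)$; the extraneous $\dot u$- and measure-terms are disposed of exactly as in the paper (eigenvalue equation at the instant $t$ plus the constraint $\int|u|^{p}=1$), and the residual $H^{2}$-weighted integral is converted to the claimed third term by one further integration by parts and an explicit appeal to the contracted Codazzi identity $\nabla_i h^{ij}=\nabla^{j}H$. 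Both arguments rest on the same ingredients; yours is shorter and makes it transparent where the ambient flatness enters, while the paper's version never names Codazzi but uses it tacitly in simplifying $g^{ij}\partial_t\Gamma^m_{ij}$.
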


\begin{proof}
  Taking derivatives with respect to $t$ on both sides of
(\ref{3.6}), we have
\begin{eqnarray}  \label{3.8}
-\frac{d}{dt}\lambda_{1,p}(u,t)=\frac{d}{dt}\int_{M_t}u\Delta_{p}udv_{t}.
\end{eqnarray}
For convenience in the computation below, set $B=|\nabla{u}|^{p-2}$,
and then $\Delta_{p}u=\mathrm{div}[B(\nabla{u})]$. Furthermore, we
have
\begin{eqnarray*}
\frac{\partial}{\partial{t}}\int_{M_t}u\Delta_{p}udv_{t}&=&\frac{\partial}{\partial{t}}\int_{M_t}g^{ij}\nabla_{i}[B(\nabla_{j}u)]udv_{t}\\
&=&\int_{M_t}\frac{\partial}{\partial{t}}\left[g^{ij}\nabla_{i}B\nabla_{j}u+B\Delta{u}\right]udv_{t}+\int_{M_t}g^{ij}\nabla_{i}[B(\nabla_{j}u)](u_{t}dv_{t}+u(dv_{t})_{t})\\
&=&\int_{M_t}\left[\left(\frac{\partial}{\partial{t}}g^{ij}\right)\nabla_{i}B\nabla_{j}u+g^{ij}\nabla_{i}B_{t}\nabla_{j}u+g^{ij}\nabla_{i}B\nabla_{j}u_{t}+B_{t}\Delta{u}+
B\frac{\partial}{\partial{t}}(\Delta{u})\right]udv_{t}\\
&&\qquad+\int_{M_t}g^{ij}\nabla_{i}[B(\nabla_{j}u)](u_{t}dv_{t}+u(dv_{t})_{t}),
\end{eqnarray*}
where, except $dv_{t}$, the subscript $(\cdot)_{t}$ means taking
derivative with respect to $t$ for the prescribed function.
Substituting the corresponding evolution equations of $g^{ij}$,
$\Delta{u}$ under the flow (\ref{1.1}) derived in the proof of
Theorem \ref{theorem1} into the above equality results in
\begin{eqnarray} \label{3.9}
&&\frac{\partial}{\partial{t}}\int_{M_t}u\Delta_{p}udv_{t}=\int_{M_t}u\Big{[}\left(2Hh^{ij}-2\kappa(t)g^{ij}\right)\nabla_{i}B\nabla_{j}u+g^{ij}\nabla_{i}B_{t}\nabla_{j}u+g^{ij}\nabla_{i}B\nabla_{j}u_{t}+B_{t}\Delta{u}+\nonumber\\
&&\qquad B\left(2Hh^{ij}\nabla_{i}\nabla_{j}u-2\kappa(t)\Delta{u}+\Delta\frac{\partial{u}}{\partial{t}}+2\nabla_{i}H\cdot{h^{im}}\nabla_{m}u\right)\Big{]}dv_{t}+\nonumber\\
&&\qquad\qquad\int_{M_t}g^{ij}\nabla_{i}[B(\nabla_{j}u)](u_{t}dv_{t}+u(dv_{t})_{t})\nonumber\\
 &&=\int_{M_t}u\left(2Hh^{ij}-2\kappa(t)g^{ij}\right)\nabla_{i}(B\nabla_{j}u)dv_{t}+\int_{M_t}g^{ij}\nabla_{i}(B_{t}\nabla_{j}u)udv_{t}+
 \int_{M_t}g^{ij}\nabla_{i}(B\nabla_{j}u_{t})udv_{t}+\nonumber\\
&&\qquad
+2\int_{M_t}Bu\nabla_{i}H\cdot{h^{im}}\nabla_{m}udv_{t}+\int_{M_t}g^{ij}\nabla_{i}[B(\nabla_{j}u)](u_{t}dv_{t}+u(dv_{t})_{t})\nonumber\\
&&=\int_{M_t}u\left(2Hh^{ij}-2\kappa(t)g^{ij}\right)\nabla_{i}(B\nabla_{j}u)dv_{t}-\int_{M_t}g^{ij}B_{t}\nabla_{i}u\cdot\nabla_{j}udv_{t}-\int_{M_t}g^{ij}B\nabla_{i}u\cdot\nabla_{j}u_{t}dv_{t}\nonumber\\
&&\qquad\qquad
+2\int_{M_t}Bu\nabla_{i}H\cdot{h^{im}}\nabla_{m}udv_{t}+\int_{M_t}g^{ij}\nabla_{i}[B(\nabla_{j}u)](u_{t}dv_{t}+u(dv_{t})_{t}).
\end{eqnarray}
Since
\begin{eqnarray*}
B_{t}&=&\frac{\partial{B}}{\partial{t}}=\frac{\partial}{\partial{t}}|\nabla{u}|^{p-2}\\
&=&\frac{\partial}{\partial{t}}\left(|\nabla{u}|^{2}\right)^{\frac{p-2}{2}}\\
&=&\frac{\partial}{\partial{t}}\left(g^{ij}\nabla_{i}u\nabla_{j}u\right)^{\frac{p-2}{2}}\\
&=&(p-2)|\nabla{u}|^{p-4}\left[Hh^{ij}-\kappa(t)g^{ij}\right]\nabla_{i}u\nabla_{j}u+(p-2)|\nabla{u}|^{p-4}g^{ij}\nabla_{i}u_{t}\cdot\nabla_{j}u,
\end{eqnarray*}
then substituting the above equality into (\ref{3.9}) yields
\begin{eqnarray} \label{3.10}
\frac{\partial}{\partial{t}}\int_{M_t}u\Delta_{p}udv_{t}&=&2\int_{M_t}u\left[Hh^{ij}-\kappa(t)g^{ij}\right]\nabla_{i}(B\nabla_{j}u)dv_{t}
-(p-2)\int_{M_t}|\nabla{u}|^{p-2}\cdot\nonumber\\
&&\left[Hh^{ij}-\kappa(t)g^{ij}\right]\nabla_{i}u\nabla_{j}udv_{t}-(p-1)\int_{M_t}|\nabla{u}|^{p-2}g^{ij}\nabla_{i}u_{t}\cdot\nabla_{j}{u}dv_{t}+\nonumber\\
&&2\int_{M_t}Bu\nabla_{i}H\cdot{h^{im}}\nabla_{m}udv_{t}+\int_{M_t}g^{ij}\nabla_{i}[B(\nabla_{j}u)](u_{t}dv_{t}+u(dv_{t})_{t})\nonumber\\
&=&-p\int_{M_t}B\left[Hh^{ij}-\kappa(t)g^{ij}\right]\nabla_{i}u\nabla_{j}udv_{t}-2\int_{M_t}BuH\nabla_{i}h^{ij}\nabla_{j}udv_{t}-\nonumber\\
\lefteqn{(p-1)\int_{M_t}Bg^{ij}\nabla_{i}u_{t}\nabla_{j}{u}dv_{t}+\int_{M_t}g^{ij}\nabla_{i}[B(\nabla_{j}u)](u_{t}dv_{t}+u(dv_{t})_{t}).}
\end{eqnarray}
By divergence theorem, we have
\begin{eqnarray*}
-(p-1)\int_{M_t}Bg^{ij}\nabla_{i}u_{t}\nabla_{j}{u}dv_{t}=(p-1)\int_{M_t}g^{ij}\nabla_{i}[B(\nabla_{j}u)]u_{t}dv_{t}.
\end{eqnarray*}
Substituting the above equality into (\ref{3.10}), we have
\begin{eqnarray}  \label{3.11}
\frac{\partial}{\partial{t}}\int_{M_t}u\Delta_{p}udv_{t}=-p\int_{M_t}B\left[Hh^{ij}-\kappa(t)g^{ij}\right]\nabla_{i}u\nabla_{j}udv_{t}-2\int_{M_t}BuH\nabla_{i}h^{ij}\nabla_{j}udv_{t}+\nonumber\\
\int_{M_t}g^{ij}\nabla_{i}[B(\nabla_{j}u)](pu_{t}dv_{t}+u(dv_{t})_{t}).
\qquad
\end{eqnarray}
If now $u$ is the eigenfunction of the first non-zero closed
eigenfunction $\lambda_{1,p}(t)$, then, as pointed out before, we
have $\lambda_{1,p}(u,t)=\lambda_{1,p}(t)$ and
$\Delta_{p}u=-\lambda_{1,p}(t)|u|^{p-2}u$. By applying this fact and
(\ref{3.7}), we can obtain
\begin{eqnarray*}
\frac{d}{dt}\int_{M_t}|u(x,t)|^{p}dv_{t}&=&\frac{d}{dt}\int_{M_t}B\cdot\left(g^{ij}\nabla_{i}u\nabla_{j}u\right)dv_{t}\\
&=&\int_{M_t}Bu(pu_{t}dv_{t}+u(dv_{t})_{t})\\
&=&-(\lambda_{1,p})^{-1}\int_{M_t}g^{ij}\nabla_{i}[B(\nabla_{j}u)](pu_{t}dv_{t}+u(dv_{t})_{t})=0.
\end{eqnarray*}
Together the above equality with (\ref{3.11}), we have
\begin{eqnarray} \label{3.12}
\frac{\partial}{\partial{t}}\int_{M_t}u\Delta_{p}udv_{t}=-p\int_{M_t}B\left[Hh^{ij}-\kappa(t)g^{ij}\right]\nabla_{i}u\nabla_{j}udv_{t}-2\int_{M_t}BuH\nabla_{i}h^{ij}\nabla_{j}udv_{t}.
\end{eqnarray}
By substituting (\ref{3.6}) and (\ref{3.8}) into (\ref{3.12}), we
have
\begin{eqnarray*}
\frac{d}{dt}\lambda_{1,p}(u,t)&=&p\int_{M_t}B\left[Hh^{ij}-\kappa(t)g^{ij}\right]\nabla_{i}u\nabla_{j}udv_{t}+2\int_{M_t}BuH\nabla_{i}h^{ij}\nabla_{j}udv_{t}\nonumber\\
&=&p\int_{M_t}BHh^{ij}\nabla_{i}u\cdot\nabla_{j}udv_{t}+2\int_{M_t}BuH\nabla_{i}h^{ij}\nabla_{j}udv_{t}-p\kappa(t)\lambda_{1,p}(t),
\end{eqnarray*}
which completes the proof of (\ref{1.3}).

Similarly, under the normalized flow (\ref{2.5}), we can obtain
\begin{eqnarray*}
\frac{d}{d\tilde{t}}\lambda_{1,p}(u,\tilde{t})=p\int_{\widetilde{M}_{\tilde{t}}}|\nabla{u}|^{p-2}\widetilde{H}\cdot\widetilde{h}^{ij}\nabla_{i}u\cdot\nabla_{j}udv_{\tilde{t}}+
2\int_{\widetilde{M}_{\tilde{t}}}|\nabla{u}|^{p-2}u\widetilde{H}\nabla_{i}\widetilde{h}^{ij}\nabla_{j}udv_{\tilde{t}}-\frac{p\widetilde{h}}{n}\cdot\lambda_{1,p}(\tilde{t}),
\end{eqnarray*}
which completes the second claim of Theorem \ref{theorem2}.
\end{proof}

\begin{remark}\rm{Since (\ref{1.3}) does not depend on the particular evolution
of $u$, we have $d\lambda_{1,p}(u,t)/dt=d\lambda_{1,p}(t)/dt$ at
some time $t$. Clearly, at some time $t\in[0,T_{\mathrm{m}})$,
(\ref{1.2}) can be directly obtained by choosing $p=2$ in
(\ref{1.3}), which gives an explanation to the fact that the
nonlinear Laplacian $\Delta_{p}$ is an extension of the linear
Laplacian $\Delta$ from the viewpoint of the evolution equation.
\emph{Because of this, one may ask that maybe it is not necessary to
derive (\ref{1.2}) independently}. However, readers can find that
the way for proving (\ref{1.2}) cannot be used to derive (\ref{1.3})
directly because of indeterminacy of the differentiability of
$\lambda_{1,p}(t)$, and we have to construct a smooth function
$\lambda_{1,p}(u,t)$ defined by (\ref{3.6}) to overcome this
problem. This is the reason why we separately give evolution
equations of the first eigenvalues of the Laplace and the
$p$-Laplace operators.
 }
\end{remark}

\section{Lower bounds of the first eigenvalue of the Laplacian}
\renewcommand{\thesection}{\arabic{section}}
\renewcommand{\theequation}{\thesection.\arabic{equation}}
\setcounter{equation}{0} \setcounter{maintheorem}{0}

In this section, we would like to give lower bounds for the first
nonzero closed eigenvalue of the Laplace operator if additionally
the initial hypersurface $M_{0}$ satisfies the pinching condition
(\ref{pinchingc}). However, first, we want to show that this
pinching condition (\ref{pinchingc}) is preserved under the forced
MCF (\ref{1.1}), i.e. the evolving hypersurface $M_{t}$ also
satisfies (\ref{pinchingc}) for any $t\in[0,T_{\mathrm{max}})$. To
prove this, we need to use Hamilton's maximum principle for tensors
on manifolds (cf. \cite[Theorem 9.1]{rsh}). For convenience, we
prefer to list its details here.

\begin{theorem} \label{theoremhamilton}  (Hamilton)
Suppose that on $0\leq{t}<T$ the evolution equation
\begin{eqnarray*}
\frac{\partial}{\partial{t}}M_{ij}=\Delta{M_{ij}}+u^{k}\nabla_{k}M_{ij}+N_{ij}
\end{eqnarray*}
holds, where $N_{ij}=p(M_{ij},g_{ij})$, a polynomial in $M_{ij}$
formed by contracting products of $M_{ij}$ with itself using the
metric, satisfies the null-eigenvector condition below. If
$M_{ij}\geq0$ at $t=0$, then it remains so on $0\leq{t}<T$.
\end{theorem}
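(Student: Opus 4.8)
The statement is Hamilton's maximum principle for symmetric $2$-tensors, so the plan is to reproduce the classical parabolic argument: assume the conclusion fails, pass to the first time at which $M_{ij}$ acquires a null eigenvector, and add a positive-definite perturbation to upgrade the degenerate differential inequality there into a strict one. Throughout I take, as in the intended application, the underlying manifold $M^{n}$ to be compact and $g(t)$, $M_{ij}(t)$, $u^{k}(t)$ and the connection to depend smoothly on $t$; I fix once and for all a time $T'<T$, so that in particular $\partial_{t}g_{ij}$ and the coefficients of $p$ are bounded on $M^{n}\times[0,T']$. Recall the null-eigenvector condition: whenever a symmetric tensor $S_{ij}\geq0$ at a point and $S_{ij}w^{j}=0$ there, then $p(S,g)_{ij}w^{i}w^{j}\geq0$; since this is a property of the fixed polynomial $p$, it is inherited by every nonnegative symmetric tensor, in particular by the perturbed tensors introduced below.

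First I would set $M^{\varepsilon}_{ij}:=M_{ij}+\varepsilon\psi(t)g_{ij}$ for $\varepsilon\in(0,1]$, with $\psi>0$ to be chosen. Using $\nabla g=0$ (so $\Delta g_{ij}=0=u^{k}\nabla_{k}g_{ij}$), the given equation for $M_{ij}$, the abbreviation $G_{ij}:=\partial_{t}g_{ij}$, and the expansion $N_{ij}=p(M_{ij},g_{ij})=p(M^{\varepsilon}_{ij}-\varepsilon\psi g_{ij},g_{ij})=p(M^{\varepsilon}_{ij},g_{ij})+\varepsilon E_{ij}$, where $E_{ij}$ is bounded on $M^{n}\times[0,T']$ uniformly in $\varepsilon$ (this uses that $N$ is a polynomial), one obtains
$$\frac{\partial}{\partial t}M^{\varepsilon}_{ij}=\Delta M^{\varepsilon}_{ij}+u^{k}\nabla_{k}M^{\varepsilon}_{ij}+p(M^{\varepsilon},g)_{ij}+\varepsilon\bigl(\psi'g_{ij}+\psi G_{ij}+E_{ij}\bigr).$$
Taking $C$ so that $-Cg_{ij}\leq G_{ij}\leq Cg_{ij}$ and $-Cg_{ij}\leq E_{ij}\leq Cg_{ij}$ on $M^{n}\times[0,T']$, the choice $\psi(t)=(C+1)e^{(C+1)t}$ yields $\psi'g_{ij}+\psi G_{ij}+E_{ij}\geq(\psi-C)g_{ij}\geq g_{ij}>0$; thus $M^{\varepsilon}_{ij}$ satisfies the same type of equation but with a reaction term exceeding $p(M^{\varepsilon},g)_{ij}$ by a strictly positive-definite amount, at least $\varepsilon g_{ij}$.

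Next I would run the first-time argument. Since $M^{\varepsilon}_{ij}\geq\varepsilon\psi(0)g_{ij}>0$ at $t=0$ and, by compactness of $M^{n}$, the lowest eigenvalue of $M^{\varepsilon}_{ij}$ with respect to $g$ depends continuously on $(x,t)$, if positivity ever fails let $t_{0}\in(0,T']$ be the first time it does and $x_{0}$ a point where $M^{\varepsilon}_{ij}(x_{0},t_{0})$ has a unit null eigenvector $v$. Extend $v$ near $x_{0}$ by radial parallel transport along $g(t_{0})$-geodesics issuing from $x_{0}$, independent of $t$, so that $\nabla v(x_{0})=0$ at time $t_{0}$, and put $f(x,t):=M^{\varepsilon}_{ij}(x,t)v^{i}(x)v^{j}(x)$. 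Then $f(\cdot,t_{0})\geq0$ with a minimum $0$ at $x_{0}$, so $\nabla f(x_{0},t_{0})=0$ and $\Delta f(x_{0},t_{0})\geq0$, while $f(x_{0},t)>0$ for $t<t_{0}$ forces $\partial_{t}f(x_{0},t_{0})\leq0$. Computing $\nabla f$ and $\Delta f$ at $(x_{0},t_{0})$: the terms containing $\nabla v$ drop out since $\nabla v(x_{0})=0$, and the remaining $M^{\varepsilon}\!\cdot\!\nabla^{2}v$ term equals $g^{kl}(\nabla_{k}\nabla_{l}v^{i})(M^{\varepsilon}_{ij}v^{j})=0$ by symmetry of $M^{\varepsilon}$ and $M^{\varepsilon}_{ij}(x_{0},t_{0})v^{j}=0$; hence $\nabla_{k}f=(\nabla_{k}M^{\varepsilon}_{ij})v^{i}v^{j}$ and $\Delta f=(\Delta M^{\varepsilon}_{ij})v^{i}v^{j}$ there. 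Contracting the evolution equation with $v^{i}v^{j}$ then gives, at $(x_{0},t_{0})$,
$$\frac{\partial f}{\partial t}=\Delta f+u^{k}\nabla_{k}f+p(M^{\varepsilon},g)_{ij}v^{i}v^{j}+\varepsilon\bigl(\psi'g_{ij}+\psi G_{ij}+E_{ij}\bigr)v^{i}v^{j}\geq\varepsilon>0,$$
since $\Delta f\geq0$, $\nabla f=0$, and $p(M^{\varepsilon},g)_{ij}v^{i}v^{j}\geq0$ by the null-eigenvector condition (applicable because $M^{\varepsilon}\geq0$ at $(x_{0},t_{0})$ with null eigenvector $v$). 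This contradicts $\partial_{t}f\leq0$. Therefore $M^{\varepsilon}_{ij}>0$ on $M^{n}\times[0,T']$ for every $\varepsilon\in(0,1]$; letting $\varepsilon\to0$ gives $M_{ij}\geq0$ on $[0,T']$, and since $T'<T$ was arbitrary, on all of $[0,T)$.

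The step I expect to be the main obstacle is the perturbation bookkeeping: one must verify that substituting $M^{\varepsilon}_{ij}$ for $M_{ij}$ inside the polynomial $p$ produces an error that is genuinely $O(\varepsilon)$ with a constant uniform in $(x,t)$ and in $\varepsilon$ — this is where compactness of $M^{n}$, the restriction to $[0,T']$, and the algebraic form of $N$ all enter — and that this error, together with the metric-evolution term $\varepsilon\psi G_{ij}$, is dominated by $\varepsilon\psi'g_{ij}$. The secondary care point is the choice of extension of the null eigenvector and the verification that, apart from $(\Delta M^{\varepsilon})v^{i}v^{j}$ and $(\nabla M^{\varepsilon})v^{i}v^{j}$, every term arising in $\Delta f$ and $\nabla f$ either vanishes or has the favorable sign at $(x_{0},t_{0})$; both of these rely on the symmetry of $M^{\varepsilon}_{ij}$ and on $v$ being a genuine null eigenvector there.
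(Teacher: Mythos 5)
The paper does not prove Theorem \ref{theoremhamilton}; it quotes it as a known result with a citation to Hamilton (\cite[Theorem 9.1]{rsh}) and uses it as a black box in Lemma \ref{lemma1}, so there is no in-paper argument to compare against. Your proof is a correct and essentially complete rendition of Hamilton's classical tensor maximum principle: you perturb to $M^{\varepsilon}_{ij}=M_{ij}+\varepsilon\psi(t)g_{ij}$, choose $\psi$ so that the resulting $\varepsilon(\psi'g_{ij}+\psi\,\partial_{t}g_{ij}+E_{ij})$ dominates a strictly positive multiple of $g_{ij}$, run the first-degenerate-time argument for $f=M^{\varepsilon}_{ij}v^{i}v^{j}$ with a $g(t_{0})$-parallel extension of the null eigenvector so the $\nabla v$ and $\nabla^{2}v$ contributions to $\Delta f$ vanish, invoke the null-eigenvector condition on the zeroth-order term, and let $\varepsilon\to0$ and $T'\nearrow T$. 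One small but worthwhile observation you make explicitly, and which the paper's informal Remark after the theorem glosses over, is that the null-eigenvector condition must be read as a pointwise algebraic property of the polynomial $p$ (valid for \emph{every} nonnegative symmetric $2$-tensor with a null direction), not merely as a property of the particular solution $M_{ij}$, since it is applied to the perturbed tensor $M^{\varepsilon}_{ij}$; this is indeed how Hamilton's condition is meant to be understood and how it is verified in Lemma \ref{lemma1}.
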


\begin{remark}\rm{
Here we would like to make an explanation to the so-called
\emph{null-eigenvector condition}. In fact,
$N_{ij}=p(M_{ij},g_{ij})$ satisfies the null-eigenvector condition
implies that for any null-eigenvector $X$ of $M_{ij}$, we have
$N_{ij}X^{i}X^{j}\geq0$. }
\end{remark}

By applying Theorem \ref{theoremhamilton}, we can prove the
following result.

\begin{lemma}  \label{lemma1}
If, in addition, the initial hypersurface $M_{0}$ satisfies the
pinching condition (\ref{pinchingc}), then the evolving hypersurface
$M_{t}$ remains so under the flow (\ref{1.1}) for any
$0\leq{t}<T_{\mathrm{max}}$.
\end{lemma}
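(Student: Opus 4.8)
The plan is to verify the hypotheses of Hamilton's tensor maximum principle (Theorem \ref{theoremhamilton}) for an appropriate tensor built from the second fundamental form, so that the pinching condition (\ref{pinchingc}) is preserved. The pinching condition $h_{ij}=\alpha_i H g_{ij}$ with $\sum_i\alpha_i=1$ and $|\alpha_i-1/n|\le\epsilon$ is, up to a choice of principal frame, equivalent to a two-sided inequality of the form $\epsilon_1 H g_{ij}\le h_{ij}\le \epsilon_2 H g_{ij}$ (with $\epsilon_1=1/n-\epsilon$, $\epsilon_2=1/n+\epsilon$), together with $H>0$; more invariantly it says the largest and smallest principal curvatures $\kappa_{\max},\kappa_{\min}$ satisfy $\kappa_{\min}\ge(1/n-\epsilon)H$ and $\kappa_{\max}\le(1/n+\epsilon)H$. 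Since strict convexity (hence $H>0$, $h_{ij}>0$) is preserved under (\ref{1.1}) by \cite[Theorem 1.1]{lmw} / Theorem \ref{theoremmain}, it suffices to show that the tensors $M_{ij}^{-}:=h_{ij}-(1/n-\epsilon)Hg_{ij}$ and $M_{ij}^{+}:=(1/n+\epsilon)Hg_{ij}-h_{ij}$ remain nonnegative along the flow.

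First I would compute the evolution equation for each $M_{ij}^{\pm}$ using (\ref{2.3}) and (\ref{2.4}), namely $\partial_t h_{ij}=\Delta h_{ij}-2Hh_{il}g^{lm}h_{mj}+|A|^2h_{ij}+\kappa(t)h_{ij}$ and $\partial_t H=\Delta H+|A|^2H-\kappa(t)H$ and $\partial_t g_{ij}=-2Hh_{ij}+2\kappa(t)g_{ij}$. The key point is that the terms involving $\kappa(t)$ are designed to cancel: $\partial_t M_{ij}^{-}=\Delta M_{ij}^{-}+(|A|^2+\kappa(t))M_{ij}^{-}+(\text{reaction terms})$, where the reaction terms are the same quadratic-in-$h$ expressions that appear in Huisken's analysis of the ordinary MCF because the extra forcing term in (\ref{1.1}) is in the position-vector direction and contributes only the conformal term $2\kappa(t)g_{ij}$ to $\partial_t g_{ij}$ and $\kappa(t)h_{ij}$, $-\kappa(t)H$ to the curvature evolutions — i.e. $\kappa$ enters only as an overall first-order zeroth-order factor, not in the quadratic reaction. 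After absorbing the (harmless, uniformly bounded on $[0,T_{\mathrm{m}})$) zeroth-order coefficient by the standard trick of passing to $e^{-c t}M_{ij}^{\pm}$ if needed, the null-eigenvector condition reduces to exactly the algebraic inequality Huisken verifies: if $X$ is a null eigenvector of $M_{ij}^{-}$ at a point, then in a principal frame the corresponding principal curvature equals $(1/n-\epsilon)H$, it is the smallest, and one checks that the reaction term $N_{ij}X^iX^j=\big(-2H h_{il}g^{lm}h_{mj}+|A|^2 h_{ij}-(1/n-\epsilon)H(|A|^2 g_{ij})+2(1/n-\epsilon)H\cdot Hh_{ij}\big)X^iX^j\ge 0$; symmetrically for $M_{ij}^{+}$. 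This is where the smallness of $\epsilon$ depending only on $n$ is used: the inequality holds precisely when $\epsilon$ is small enough that the pinching interval $[(1/n-\epsilon)H,(1/n+\epsilon)H]$ is narrow enough to force the quadratic form to have the right sign, exactly as in the pinching-preservation step of Huisken's theorem.

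The main obstacle is the algebraic verification of the null-eigenvector condition for the reaction term under the narrow pinching — making explicit the dependence of the admissible $\epsilon$ on $n$ and checking that once $M_0$ satisfies (\ref{pinchingc}) the constant $\epsilon$ is not forced to shrink along the flow (it is not, since the differential inequality is autonomous in the pinching constant). A secondary technical point is that Hamilton's Theorem \ref{theoremhamilton} is stated with a transport term $u^k\nabla_k M_{ij}$ and a reaction $N_{ij}$ polynomial in $M_{ij}$ and $g_{ij}$; here there is no transport term and $N_{ij}$ is polynomial in $h_{ij}$ and $g_{ij}$, but since $h_{ij}=M_{ij}^{-}+(1/n-\epsilon)Hg_{ij}$ and $H=g^{ij}h_{ij}/1$ can be rewritten through $M_{ij}^{-}$ and $g_{ij}$, this is legitimate — one just records that on a null eigenvector all the auxiliary terms reduce to the principal-curvature computation above. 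Once both $M_{ij}^{-}\ge0$ and $M_{ij}^{+}\ge0$ are propagated to all $t\in[0,T_{\mathrm{max}})$, we recover $h_{ij}=\alpha_i(t)Hg_{ij}$ in a principal frame with $\alpha_i(t)\in[1/n-\epsilon,1/n+\epsilon]$ and $\sum_i\alpha_i(t)=1$ automatically (tracing $h_{ij}=\sum\alpha_i Hg_{ij}$ against $g^{ij}$ gives $H=H\sum\alpha_i$), which is exactly (\ref{pinchingc}) for $M_t$, completing the proof.
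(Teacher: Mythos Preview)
Your overall strategy --- apply Hamilton's tensor maximum principle (Theorem \ref{theoremhamilton}) to tensors of the form $h_{ij}-cHg_{ij}$ --- is exactly the paper's. The only cosmetic difference is that the paper takes $c=\alpha_i$ (the specific constants from (\ref{pinchingc})) and shows that both $h_{ij}-\alpha_iHg_{ij}\ge0$ and $\le0$ persist, hence the equality is preserved with the \emph{same} $\alpha_i$; you instead take $c=1/n\mp\epsilon$ and preserve the two-sided inequality $(1/n-\epsilon)Hg_{ij}\le h_{ij}\le(1/n+\epsilon)Hg_{ij}$, which is the form actually used downstream in Theorems \ref{theorembound} and \ref{theorem3}. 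Both routes are valid.

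There is, however, a genuine misdiagnosis in your proposal. You assert that the null-eigenvector condition is ``where the smallness of $\epsilon$ depending only on $n$ is used'' and that ``the inequality holds precisely when $\epsilon$ is small enough''. This is false. If $X$ is a null eigenvector of $M_{ij}^{-}=h_{ij}-c'Hg_{ij}$, so that $h_{ij}X^j=c'HX_i$, then from the evolution equation
\[
\partial_tM_{ij}^{-}=\Delta M_{ij}^{-}+\bigl(|A|^2+\kappa(t)\bigr)M_{ij}^{-}-2H\bigl(h_{il}g^{lm}h_{mj}-c'Hh_{ij}\bigr)
\]
one computes directly
\[
N_{ij}^{-}X^iX^j=-2H\Bigl((c'H)^2|X|^2-(c'H)^2|X|^2\Bigr)=0,
\]
\emph{identically}, for any constant $c'$; and symmetrically for $M_{ij}^{+}$. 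The paper's proof makes this explicit: it finds $N_{ij}X^iX^j=0$, not merely $\ge0$. (The same is true in Huisken's pinching-preservation step for the classical MCF; no smallness is required there either.) The smallness of $\epsilon$ in (\ref{pinchingc}) plays no role in Lemma \ref{lemma1}; it enters only later, in the integral estimates of Section~4. For the same reason, your proposed device of passing to $e^{-ct}M_{ij}^{\pm}$ to absorb the $\kappa(t)$ term is unnecessary: the zeroth-order piece $(|A|^2+\kappa(t))M_{ij}^{\pm}$ already vanishes on null eigenvectors of $M_{ij}^{\pm}$.
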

\begin{proof}
By (\ref{pinchingc}), we have
\begin{eqnarray*}
h_{ij}=\alpha_{i}Hg_{ij}, \qquad \mathrm{on}~M_{0},
\end{eqnarray*}
that is,
\begin{eqnarray*}
\alpha_{i}Hg_{ij}\leq{h_{ij}}\leq\alpha_{i}Hg_{ij}, \qquad
\mathrm{on}~M_{0}.
\end{eqnarray*}
On the other hand, by (\ref{2.3}) and (\ref{2.4}), we have
\begin{eqnarray*}
&&\frac{\partial}{\partial{t}}\left(h_{ij}-\alpha_{i}Hg_{ij}\right)=\Delta\left(h_{ij}-\alpha_{i}Hg_{ij}\right)+|A|^{2}\left(h_{ij}-\alpha_{i}Hg_{ij}\right)
+\kappa(t)\left(h_{ij}-\alpha_{i}Hg_{ij}\right)-\nonumber\\
 &&\qquad\qquad\qquad\qquad\qquad  2H\left(h_{il}g^{lm}h_{mj}-\alpha_{i}Hh_{ij}\right).
\end{eqnarray*}
Now, we use Theorem \ref{theoremhamilton} to prove Lemma
\ref{lemma1}. In fact, we can choose
\begin{eqnarray*}
M_{ij}=h_{ij}-\alpha_{i}Hg_{ij}
\end{eqnarray*}
and
\begin{eqnarray*}
N_{ij}=|A|^{2}\left(h_{ij}-\alpha_{i}Hg_{ij}\right)
+\kappa(t)\left(h_{ij}-\alpha_{i}Hg_{ij}\right)-2H\left(h_{il}g^{lm}h_{mj}-\alpha_{i}Hh_{ij}\right).
\end{eqnarray*}
Clearly, $M_{ij}\geq0$ at $t=0$. It only needs to check that
$N_{ij}$ is nonnegative on the null-eigenvectors of $M_{ij}$. Assume
that, for some vector $X=\{X^{i}\}$, we have
\begin{eqnarray*}
h_{ij}X^{j}=\alpha_{i}HX_{i}.
\end{eqnarray*}
So, we can obtain
\begin{eqnarray*}
N_{ij}X^{i}X^{j}&=&\left[|A|^{2}+\kappa(t)\right]\left(\alpha_{i}HX_{i}X^{i}-\alpha_{i}Hg_{ij}X^{i}X^{j}\right)
-2H\left(h_{il}g^{lm}\alpha_{m}HX_{m}X^{i}-\alpha_{i}^{2}H^{2}X_{i}X^{i}\right)\nonumber\\
&=&\left[|A|^{2}+\kappa(t)\right]\left(\alpha_{i}HX_{i}X^{i}-\alpha_{i}Hg_{ij}X^{i}X^{j}\right)-2H\left(\alpha_{m}\alpha_{l}H^{2}g^{lm}X_{m}X_{l}-\alpha_{i}^{2}H^{2}X_{i}X^{i}\right)=0.
\end{eqnarray*}
Hence, $M_{ij}\geq0$ on $M_{t}$ for any $0\leq{t}<T_{\mathrm{max}}$,
i.e. $h_{ij}\geq\alpha_{i}Hg_{ij}$ for any
$t\in[0,T_{\mathrm{max}})$. Similarly, one can easily get
$h_{ij}\leq\alpha_{i}Hg_{ij}$ for any $0\leq{t}<T_{\mathrm{max}}$.
So, we have
\begin{eqnarray*}
h_{ij}=\alpha_{i}Hg_{ij}, \qquad
\mathrm{on}~M_{t}~~\mathrm{for}~~0\leq{t}<T_{\mathrm{max}},
\end{eqnarray*}
which implies our conclusion.
\end{proof}

By applying Theorem \ref{theorem1} and Lemma \ref{lemma1}, we can
prove Theorem \ref{theorembound} as follows.

\vspace{2mm}

\emph{\textbf{Proof of Theorem \ref{theorembound}}}. By applying
Theorem \ref{theorem1} and Lemma \ref{lemma1} directly, we can
obtain
\begin{eqnarray} \label{4.1}
\frac{d}{dt}\lambda_{1}(t)&=&-2\lambda_{1}\kappa(t)+2\int_{M_t}Hg^{im}h_{mq}g^{qj}\nabla_{i}u\nabla_{j}u+2\int_{M_t}uH\nabla_{i}\left(g^{im}h_{mq}g^{qj}\right)\nabla_{j}u\nonumber\\
&=&-2\lambda_{1}\kappa(t)+2\left[\int_{M_t}Hg^{im}\alpha_{m}Hg_{mq}g^{qj}\nabla_{i}u\nabla_{j}u+\int_{M_t}uH\nabla_{i}\left(g^{im}\alpha_{m}Hg_{mq}g^{qj}\right)\nabla_{j}u\right]\nonumber\\
&\geq&-2\lambda_{1}\kappa(t)+2\left(\frac{1}{n}-\epsilon\right)\int_{M_t}H^{2}|\nabla{u}|^{2}+2\int_{M_t}uH\alpha_{j}\nabla_{i}H\cdot{g^{ij}}\cdot\nabla_{j}u.
\end{eqnarray}
On the other hand, by integrating by parts to the last term of the
right hand side of (\ref{4.1}), the pinching condition
(\ref{pinchingc}) and the fact that the first nonzero closed
eigenvalue $\lambda_{1}(t)$ is always positive, we have
\begin{eqnarray} \label{4.2}
&&\int_{M_t}uH\alpha_{j}\nabla_{i}H\cdot{g^{ij}}\cdot\nabla_{j}u=-\frac{1}{2}\left(\int_{M_t}H^{2}\alpha_{j}\nabla_{i}u\cdot{g^{ij}}\cdot\nabla_{j}u+\int_{M_t}uH^{2}\alpha_{j}\cdot{g^{ij}\nabla_{i}\nabla_{j}u}\right) \nonumber\\
&&\qquad \qquad  \geq-\frac{\left(\frac{1}{n}+\epsilon\right)}{2}\int_{M_t}H^{2}|\nabla{u}|^{2}+\frac{\lambda_{1}(t)}{2n}\int_{M_t}u^{2}H^{2}-\frac{1}{2}\int_{M_t}uH^{2}\left(\frac{1}{n}-\alpha_{j}\right)\cdot{g^{ij}\nabla_{i}\nabla_{j}u} \nonumber\\
&&\qquad \qquad
 \geq-\frac{\left(\frac{1}{n}+\epsilon\right)}{2}\int_{M_t}H^{2}|\nabla{u}|^{2}+\frac{\left(\frac{1}{n}-2\epsilon\right)\lambda_{1}(t)}{2}\int_{M_t}u^{2}H^{2}.
\end{eqnarray}
The last inequality holds since, on one hand, for
$0<t<T_{0}<T_{\mathrm{max}}$, we know that $M_{t}$ is strictly
convex (cf. \cite[Corollary 2.5]{lmw}) and bounded, and $H$ is
continuous. Then $H$ has positive maximum and minimum on $M_{t}$,
which are finite. Define
$H_{\mathrm{max}}(t)=\max_{x\in{M_t}}H(x,t)$ and
$H_{\mathrm{min}}(t)=\min_{x\in{M_t}}H(x,t)$ , so
\begin{eqnarray*}
\min_{1\leq{j}\leq{n}}\left|\frac{1}{n}-\alpha_{j}\right|\cdot
H_{\mathrm{min}}^{2}(t)\lambda_{1}(t)\leq\left|\int_{M_{t}}\left(\frac{1}{n}-\alpha_{j}\right)uH^{2}g^{ij}\nabla_{i}\nabla_{j}u\right|\leq\epsilon{H_{\mathrm{max}}^{2}(t)}\lambda_{1}(t).
\end{eqnarray*}
Therefore, by suitably choose $\epsilon$, the equality
\begin{eqnarray*}
-\int_{M_{t}}\left(\frac{1}{n}-\alpha_{j}\right)uH^{2}g^{ij}\nabla_{i}\nabla_{j}u\geq-2\epsilon\lambda_{1}(t)\int_{M_t}u^{2}H^{2}
\end{eqnarray*}
always holds. On the other hand, by Theorem \ref{theoremmain}, we
know that $H_{\mathrm{min}}(t)/H_{\mathrm{max}}(t)\rightarrow1$ as
$t\rightarrow{T_{\mathrm{max}}}$ (this is because $M_{t}$ converges
\emph{spherically} as $t\rightarrow{T_{\mathrm{max}}}$). So, for
sufficiently small $\epsilon>0$, there exists some $\delta>0$ such
that $|H_{\mathrm{min}}(t)/H_{\mathrm{max}}(t)-1|\leq\epsilon$ for
$T_{\mathrm{max}}-\delta\leq{t}<T_{\mathrm{max}}$. This implies that
$|\int_{M_t}u^{2}H^{2}/H_{\mathrm{max}}^{2}(t)-1|$ must be small
enough for $T_{\mathrm{max}}-\delta\leq{t}<T_{\mathrm{max}}$. Hence,
by suitably choose $\epsilon$, we can also get the above inequality.
 Now, substituting (\ref{4.2}) into
(\ref{4.1}) results in
\begin{eqnarray*}
\frac{d}{dt}\lambda_{1}(t)\geq-2\lambda_{1}\kappa(t)+\left(\frac{1}{n}-3\epsilon\right)\int_{M_t}H^{2}|\nabla{u}|^{2}+\left(\frac{1}{n}-2\epsilon\right)\lambda_{1}(t)\int_{M_t}u^{2}H^{2}.
\end{eqnarray*}
Since $\epsilon$ is small enough, without loss of generality, choose
$\epsilon\ll\frac{1}{3n}$, then we have
\begin{eqnarray} \label{4.3}
\frac{d}{dt}\lambda_{1}(t)\geq-2\lambda_{1}\kappa(t).
\end{eqnarray}
 Dividing  both sides of (\ref{4.3}) by $\lambda_{1}$ and
then integrating from $0$ to $t$ ($0<t<T_{\mathrm{m}}$), we have
 \begin{eqnarray*}
 \log\lambda_{1}(t)- \log\lambda_{1}(0)\geq-2\int_{0}^{t}\kappa(\tau)d\tau,
 \end{eqnarray*}
 which implies the assertion of Theorem \ref{theorembound}.
\hfill $\square$

Of course, under the assumption of Lemma \ref{lemma1}, we can also
give a lower bound for the first eigenvalue of the Laplace operator
under the normalized flow (\ref{2.6}) by repeating almost the same
process as above, since from Theorem \ref{theorem1}, we know that
there is no essential difference between the evolution equation of
the first eigenvalue under the unnormalized flow and the
corresponding one under the normalized flow. In fact, we can easily
get
\begin{eqnarray*}
\widetilde{\lambda}_{1}(\tilde{t})\geq
e^{-2\int_{0}^{\tilde{t}}\frac{\widetilde{h}}{n}d\tau}\cdot\widetilde{\lambda}_{1}(0)
\end{eqnarray*}
for $0\leq\tilde{t}<\widetilde{T}_{\mathrm{m}}$.

However, we cannot just repeat the above process to try to get a
similar conclusion for the $p$-Laplace operator when $p\neq2$,
since, as mentioned in Section 3, we do not know whether
$\lambda_{1,p}(t)$ is differentiable or not.

\section{Monotonicity of the first eigenvalues of the Laplacian and the $p$-Laplacian}
\renewcommand{\thesection}{\arabic{section}}
\renewcommand{\theequation}{\thesection.\arabic{equation}}
\setcounter{equation}{0} \setcounter{maintheorem}{0}

By applying Theorems \ref{theorem1} and \ref{theorem2}, we can
easily obtain the following monotonicity for the first eigenvalue.

\begin{theorem} \label{theorem3}
Let $M_{t}$,  $\lambda_{1}(t)$,
$\widetilde{\lambda}_{1}(\tilde{t})$, $\lambda_{1,p}(t)$, and
$\widetilde{\lambda}_{1,p}(\tilde{t})$ be defined as in Theorems
\ref{theorem1} and \ref{theorem2}. Let $T_{\mathrm{m}}$ be defined
by (\ref{key}), and let $\widetilde{T}_{\mathrm{m}}$ be defined as
in Theorem \ref{theorem2}. Denote by $H_{\mathrm{max}}(0)$ and
$H_{\mathrm{min}}(0)$ the maximal and the minimal values of the men
curvature on the initial hypersurface $M_{0}$, respectively. Assume
that $M_{0}$ satisfies the pinching condition (\ref{pinchingc}).
Then we have

(I) If
\begin{eqnarray*}
e^{-2\int_{0}^{t}\kappa(\tau)d\tau}
\left[\frac{H^{-1}_{\mathrm{max}}(0)-2H_{\mathrm{max}}(0)\int_{0}^{t}e^{-2\int_{0}^{\tau}\kappa(s)ds}d\tau}{H_{\mathrm{max}}(0)}\right]^{-1}\leq{n}\kappa(t)
\end{eqnarray*}
for $0\leq{t}<T_{\mathrm{m}}$, then $\lambda_{1}(t)$ is
non-increasing for $0\leq{t}<T_{\mathrm{m}}$ under the flow
(\ref{1.1}), and $\lambda_{1,p}(t)$ is non-increasing and
differentiable almost everywhere for $0\leq{t}<T_{\mathrm{m}}$ under
the flow (\ref{1.1}). If
\begin{eqnarray*}
e^{-2\int_{0}^{t}\kappa(\tau)d\tau}
\left[\frac{H^{-1}_{\mathrm{min}}(0)-2H_{\mathrm{min}}(0)\int_{0}^{t}e^{-2\int_{0}^{\tau}\kappa(s)ds}d\tau}{nH_{\mathrm{min}}(0)}\right]^{-1}\geq{n\kappa(t)}
\end{eqnarray*}
 for
$0\leq{t}<T_{\mathrm{m}}$, then $\lambda_{1}(t)$ is nondecreasing
for $0\leq{t}<T_{\mathrm{m}}$ under the flow (\ref{1.1}), and
$\lambda_{1,p}(t)$ is nondecreasing and differentiable almost
everywhere for $0\leq{t}<T_{\mathrm{m}}$ under the flow (\ref{1.1}).

(II) If
\begin{eqnarray*}
e^{-2\int_{0}^{\tilde{t}}\frac{\widetilde{h}}{n}d\tau}
\left[\frac{H^{-1}_{\mathrm{max}}(0)-2H_{\mathrm{max}}(0)\int_{0}^{\tilde{t}}e^{-2\int_{0}^{\tau}\frac{\widetilde{h}}{n}ds}d\tau}{H_{\mathrm{max}}(0)}\right]^{-1}\leq\widetilde{h}
\end{eqnarray*}
for $0\leq{\tilde{t}}<\widetilde{T}_{\mathrm{m}}$, then
$\widetilde{\lambda}_{1}(\tilde{t})$ is non-increasing under the
normalized flow (\ref{2.6}), and
$\widetilde{\lambda}_{1,p}(\tilde{t})$ is non-increasing and
differentiable almost everywhere under the normalized flow
(\ref{2.6}). If
\begin{eqnarray*}
e^{-2\int_{0}^{\tilde{t}}\frac{\widetilde{h}}{n}d\tau}
\left[\frac{H^{-1}_{\mathrm{min}}(0)-2H_{\mathrm{min}}(0)\int_{0}^{\tilde{t}}e^{-2\int_{0}^{\tau}\frac{\widetilde{h}}{n}ds}d\tau}{nH_{\mathrm{min}}(0)}\right]^{-1}\geq\widetilde{h}
\end{eqnarray*}
 for
$0\leq{\tilde{t}}<\widetilde{T}_{\mathrm{m}}$, then
$\widetilde{\lambda}_{1}(\tilde{t})$ is nondecreasing under the
normalized flow (\ref{2.6}), and
$\widetilde{\lambda}_{1,p}(\tilde{t})$ is nondecreasing and
differentiable almost everywhere under the normalized flow
(\ref{2.6}).
\end{theorem}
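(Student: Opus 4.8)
The plan is to feed the pinching structure into the evolution equations of Theorems \ref{theorem1} and \ref{theorem2}, and then to convert the resulting identities into differential inequalities for $\lambda_{1}(t)$ (resp.\ for the auxiliary function $\lambda_{1,p}(u,t)$) whose right–hand sides are controlled by the extremal values of the mean curvature along the flow; those extrema are in turn estimated by an ODE comparison based on the evolution equation (\ref{2.4}) for $H$. The hypotheses of the theorem are, by design, exactly the conditions that make the final differential inequality have a definite sign.

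I would first treat $\lambda_{1}$. By Lemma \ref{lemma1} the pinching condition (\ref{pinchingc}) is preserved, so on $[0,T_{\mathrm m})$ every $M_{t}$ satisfies $h_{ij}=\alpha_{i}Hg_{ij}$. Substituting this into (\ref{1.2}), integrating the last term by parts (using $\int_{M_{t}}u^{2}=1$, $-\Delta u=\lambda_{1}u$, hence $\int_{M_{t}}|\nabla u|^{2}=\lambda_{1}$, and the contracted Codazzi identity $\nabla_{i}h^{ij}=\nabla^{j}H$), and estimating exactly as in the proof of Theorem \ref{theorembound}, one obtains, for every sufficiently small $\epsilon$,
\begin{eqnarray*}
-2\lambda_{1}\kappa(t)+\left(\frac{2}{n}-c\epsilon\right)\lambda_{1}H_{\mathrm{min}}^{2}(t)\ \leq\ \frac{d}{dt}\lambda_{1}(t)\ \leq\ -2\lambda_{1}\kappa(t)+\left(\frac{2}{n}+c\epsilon\right)\lambda_{1}H_{\mathrm{max}}^{2}(t),
\end{eqnarray*}
where $H_{\mathrm{max}}(t)$, $H_{\mathrm{min}}(t)$ are the (finite and positive) extrema of $H$ on the compact strictly convex $M_{t}$ and $c=c(n)$. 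Next, from (\ref{2.4}) together with the elementary bounds $\frac{1}{n}H^{2}\leq|A|^{2}\leq H^{2}$ valid on a convex hypersurface, the standard argument for evolving maxima shows that $t\mapsto H_{\mathrm{max}}(t)$ and $t\mapsto H_{\mathrm{min}}(t)$ are locally Lipschitz and satisfy, at a.e.\ $t$, $\frac{d}{dt}H_{\mathrm{max}}\leq H_{\mathrm{max}}^{3}-\kappa H_{\mathrm{max}}$ and $\frac{d}{dt}H_{\mathrm{min}}\geq \frac{1}{n}H_{\mathrm{min}}^{3}-\kappa H_{\mathrm{min}}$ (the Laplacian term has the favourable sign at an interior extremum). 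Each comparison ODE $w'=aw^{3}-\kappa w$ is a Bernoulli equation which, via the substitution $w^{-2}$, integrates explicitly; comparing solutions on $[0,T_{\mathrm m})$ yields upper and lower bounds for $H_{\mathrm{max}}^{2}(t)$ and $H_{\mathrm{min}}^{2}(t)$ which are precisely the bracketed expressions in the statement (these are the $H^{2}$ of the round–sphere solution (\ref{solution1}); cf.\ Remark \ref{remarkadd1}). Inserting these bounds into the displayed two–sided estimate and choosing $\epsilon\ll 1/(3n)$ so that the $\epsilon$–terms are absorbed (as in Theorem \ref{theorembound}), the hypothesis of case (I) forces $\frac{d}{dt}\lambda_{1}(t)\leq0$ for all $t$ in the first situation and $\frac{d}{dt}\lambda_{1}(t)\geq0$ in the second; integrating gives the asserted monotonicity.

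For the $p$–Laplacian I would run the same argument through the smooth auxiliary function $\lambda_{1,p}(u,t)$ of (\ref{3.6}): substituting the preserved pinching into (\ref{1.3}) and integrating by parts with the help of (\ref{3.7}) gives a two–sided estimate for $\frac{d}{dt}\lambda_{1,p}(u,t)$ of the same shape, with the factor $2$ in the leading terms replaced by $p$, and the same extremal–curvature bounds make its sign definite under the stated conditions. To pass from $\lambda_{1,p}(u,t)$ back to $\lambda_{1,p}(t)$ one uses that $t\mapsto\lambda_{1,p}(t)$ is locally Lipschitz — the metric $g_{ij}(t)$ depends smoothly on $t$, and through the variational characterization (\ref{2.2}) over the fixed function class this forces Lipschitz dependence of $\lambda_{1,p}$ — hence differentiable a.e.\ by Rademacher's theorem; at such $t$ one has $\frac{d}{dt}\lambda_{1,p}(t)=\frac{d}{dt}\lambda_{1,p}(u,t)$, so the sign conditions make $\lambda_{1,p}(t)$ monotone. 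Finally, part (II) is the verbatim argument applied to the normalized flow (\ref{2.6}): the normalized evolution equations in Theorems \ref{theorem1}, \ref{theorem2} and the normalized counterpart of (\ref{2.4}) differ from the unnormalized ones only through the replacement of $\kappa(t)$ by $\widetilde h/n$, so the same comparison and bookkeeping, with this substitution, give the monotonicity of $\widetilde\lambda_{1}(\tilde t)$ and $\widetilde\lambda_{1,p}(\tilde t)$.

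The step I expect to be the main obstacle is the ODE comparison for $H_{\mathrm{max}}$ and $H_{\mathrm{min}}$: the comparison functions $w$ may blow up in finite time, so one must work on $[0,T_{\mathrm m})$ and keep careful track of the direction of every inequality (the map $w\mapsto aw^{3}-\kappa w$ is not monotone), while simultaneously matching the resulting explicit bounds precisely to the rather unwieldy expressions in the hypotheses and keeping the pinching error $\epsilon$ small enough to be absorbed. A subsidiary technical point is justifying the a.e.\ differentiability of $\lambda_{1,p}(t)$, which must be derived from the smooth $t$–dependence of the metric rather than assumed.
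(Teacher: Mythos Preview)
Your proposal is correct and follows the paper's strategy: preserve the pinching via Lemma~\ref{lemma1}, insert it into the evolution formulas (\ref{1.2}) and (\ref{1.3}) to get two--sided differential inequalities for the eigenvalue in terms of the extrema of $H$, and control $H_{\max}(t)$, $H_{\min}(t)$ by comparing (\ref{2.4}) (through $\frac{1}{n}H^{2}\le|A|^{2}\le H^{2}$) with the Bernoulli ODEs $\rho'=\rho^{3}-\kappa\rho$ and $\sigma'=\frac{1}{n}\sigma^{3}-\kappa\sigma$, whose explicit solutions are precisely the bracketed expressions in the hypotheses. The paper phrases the comparison as the parabolic maximum principle applied to $H-\rho$ and $H-\sigma$ rather than your evolving--extrema formulation, but these are equivalent.

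The one place you deviate is in passing from $\lambda_{1,p}(u,t)$ back to $\lambda_{1,p}(t)$. The paper does not argue Lipschitz continuity and invoke Rademacher; instead it fixes $t_{0}$, notes that the sign of $\frac{d}{dt}\lambda_{1,p}(u,t)$ is fixed by the hypothesis, integrates over $[t_{0}-\xi,t_{0}]$, and then uses the variational characterization (\ref{2.2}) to obtain $\lambda_{1,p}(t_{0}-\xi)\le\lambda_{1,p}(u(\cdot,t_{0}-\xi),t_{0}-\xi)\le\lambda_{1,p}(u(\cdot,t_{0}),t_{0})=\lambda_{1,p}(t_{0})$; a.e.\ differentiability then comes from Lebesgue's theorem for monotone functions. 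Your route also works, but note that the equality $\frac{d}{dt}\lambda_{1,p}(t)=\frac{d}{dt}\lambda_{1,p}(u,t)$ at a differentiability point still needs $u$ to be extended so that (\ref{3.7}) holds on a neighbourhood of $t_{0}$ (so that $\lambda_{1,p}(u,t)\ge\lambda_{1,p}(t)$ there), which is exactly the Cao--type construction underlying the paper's argument; the paper's version has the advantage that differentiability is a consequence rather than an input.
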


\begin{proof}
By (\ref{2.4}) and the fact that the convexity is preserved under
the forced MCF (\ref{1.1}), that is, $M_{t}$ is convex (cf.
\cite[Corollary 2.5]{lmw}), we have
\begin{eqnarray*}
\frac{\partial{H}}{\partial{t}}&=&\Delta{H}+|A|^{2}H-\kappa(t)H\\
&\leq&\Delta{H}+H^{3}-\kappa(t)H.
\end{eqnarray*}
Let $\rho(t)$ be the solution of the initial value problem
\begin{eqnarray*}
\left\{
\begin{array}{lll}
\frac{d}{dt}\rho(t)=\rho^{3}(t)-\kappa(t)\rho(t),\\
\\
\rho(0)=H_{\mathrm{max}}(0):=\max\limits_{x\in{M_0}}H(x,0). & \quad
\end{array}
\right.
\end{eqnarray*}
By applying the maximum principle to the function $H(x,t)-\rho(t)$,
we can obtain
\begin{eqnarray*}
H(x,t)\leq\rho(t)=e^{-\int_{0}^{t}\kappa(\tau)d\tau}
\left[\frac{H^{-1}_{\mathrm{max}}(0)-2H_{\mathrm{max}}(0)\int_{0}^{t}e^{-2\int_{0}^{\tau}\kappa(s)ds}d\tau}{H_{\mathrm{max}}(0)}\right]^{-1/2}.
\end{eqnarray*}
Similarly, by (\ref{2.4}) we have
\begin{eqnarray*}
\frac{\partial{H}}{\partial{t}}&=&\Delta{H}+|A|^{2}H-\kappa(t)H\\
&\geq&\Delta{H}+\frac{H^{3}}{n}-\kappa(t)H.
\end{eqnarray*}
Let $\sigma(t)$ be the solution of the initial value problem
\begin{eqnarray*}
\left\{
\begin{array}{lll}
\frac{d}{dt}\sigma(t)=\sigma^{3}(t)-\kappa(t)\sigma(t),\\
\\
\sigma(0)=H_{\mathrm{min}}(0):=\min\limits_{x\in{M_0}}H(x,0). &
\quad
\end{array}
\right.
\end{eqnarray*}
By applying the maximum principle to the function
$H(x,t)-\sigma(t)$, we can obtain
\begin{eqnarray*}
H(x,t)\geq\sigma(t)=e^{-\int_{0}^{t}\kappa(\tau)d\tau}
\left[\frac{H^{-1}_{\mathrm{min}}(0)-2H_{\mathrm{min}}(0)\int_{0}^{t}e^{-2\int_{0}^{\tau}\kappa(s)ds}d\tau}{nH_{\mathrm{min}}(0)}\right]^{-1/2}.
\end{eqnarray*}
From the proof of Theorem \ref{theorembound}, we know that once the
initial hypersurface $M_0$ satisfies the pinching condition
(\ref{pinchingc}), $M_t$ remains so and
\begin{eqnarray*}
\frac{d}{dt}\lambda_{1}(t)\geq-2\lambda_{1}\kappa(t)+\left(\frac{1}{n}-3\epsilon\right)\int_{M_t}H^{2}|\nabla{u}|^{2}+\left(\frac{1}{n}-2\epsilon\right)\lambda_{1}(t)\int_{M_t}u^{2}H^{2}.
\end{eqnarray*}
Hence, we have
\begin{eqnarray*}
\frac{d}{dt}\lambda_{1}(t)&\geq&-2\lambda_{1}\kappa(t)+\sigma^{2}\left[\left(\frac{1}{n}-3\epsilon\right)\int_{M_t}|\nabla{u}|^{2}+\left(\frac{1}{n}-2\epsilon\right)\lambda_{1}(t)\int_{M_t}u^{2}\right]\nonumber\\
&=&2\lambda_{1}\cdot\left[-\kappa(t)+\left(\frac{1}{n}-\frac{5}{2}\epsilon\right)\sigma^{2}\right],
\end{eqnarray*}
which implies that $\lambda_{1}(t)$ is non-decreasing under the flow
(\ref{1.1}) provided $\sigma^{2}\geq{n\kappa(t)}$.

On the other hand, similar to the proof of Theorem
\ref{theorembound}, one can easily get
\begin{eqnarray*}
\frac{d}{dt}\lambda_{1}(t)\leq-2\lambda_{1}\kappa(t)+2\left(\frac{1}{n}+\epsilon\right)\int_{M_t}H^{2}|\nabla{u}|^{2}+2\int_{M_t}uH\alpha_{j}\nabla_{i}H\cdot{g^{ij}}\cdot\nabla_{j}u.
\end{eqnarray*}
and
\begin{eqnarray*}
\int_{M_t}uH\alpha_{j}\nabla_{i}H\cdot{g^{ij}}\cdot\nabla_{j}u\leq-\frac{\left(\frac{1}{n}-\epsilon\right)}{2}\int_{M_t}H^{2}|\nabla{u}|^{2}+\frac{\left(\frac{1}{n}+2\epsilon\right)\lambda_{1}(t)}{2}\int_{M_t}u^{2}H^{2}.
\end{eqnarray*}
 by applying Theorem \ref{theorem1} and Lemma \ref{lemma1}, and suitably
 choosing $\epsilon$. Combining the above two inequalities yields
\begin{eqnarray*}
\frac{d}{dt}\lambda_{1}(t)&\leq&-2\lambda_{1}\kappa(t)+\rho^{2}\left[\left(\frac{1}{n}+3\epsilon\right)\int_{M_t}|\nabla{u}|^{2}+\left(\frac{1}{n}+2\epsilon\right)\lambda_{1}(t)\int_{M_t}u^{2}\right]\nonumber\\
&=&2\lambda_{1}\cdot\left[-\kappa(t)+\left(\frac{1}{n}+\frac{5}{2}\epsilon\right)\rho^{2}\right],
\end{eqnarray*}
which implies that $\lambda_{1}(t)$ is non-increasing under the flow
(\ref{1.1}) provided $\rho^{2}\leq{n}\kappa(t)$.

Now, for the case of the $p$-Laplacian, by Lemma \ref{lemma1}, if
the evolving hypersurface $M_t$ satisfies (\ref{pinchingc}), $M_t$
remains so. Then, together with Theorem \ref{theorem2} and similar
to the proof of Theorem \ref{theorembound}, at some time
$t_{0}\in[0,T_{\mathrm{m}})$ we have
\begin{eqnarray*}
\frac{d}{dt}\lambda_{1,p}(u,t)&\geq&-p\lambda_{1,p}(t)\kappa(t)+\sigma^{2}\left\{\left[\frac{p-1}{n}-(p+1)\epsilon\right]\int_{M_t}|\nabla{u}|^{p}+\left(\frac{1}{n}-2\epsilon\right)\lambda_{1,p}(t)\int_{M_t}u^{p}\right\}\nonumber\\
&=&p\cdot\lambda_{1,p}(t)\cdot\left[-\kappa(t)+\left(\frac{1}{n}-\frac{p+3}{p}\epsilon\right)\sigma^{2}\right]
\end{eqnarray*}
at the time $t_{0}$, which implies that
\begin{eqnarray*}
\frac{d}{dt}\lambda_{1,p}(u,t)\Big{|}_{t=t_0}\geq0
\end{eqnarray*}
provided $\sigma^{2}\geq{n}\kappa(t)$. Since $\lambda_{1,p}(u,t)$
defined by (\ref{3.6}) is a smooth function with respect to $t$,
then, for any sufficiently small number $\xi>0$, we have
\begin{eqnarray*}
\frac{d}{dt}\lambda_{1,p}(u,t)\geq0
\end{eqnarray*}
on the interval $[t_{0}-\xi,t_{0}]$. Integrating the above
inequality on $[t_{0}-\xi,t_{0}]$, we can obtain
\begin{eqnarray}  \label{5.1}
\lambda_{1,p}(u(\cdot,t_{0}-\xi),t_{0}-\xi)\leq\lambda_{1,p}(u(\cdot,t_{0}),t_{0}).
\end{eqnarray}
By the definition (\ref{3.6}) of $\lambda_{1,p}(u,t)$, we know that
$\lambda_{1,p}(u(\cdot,t_{0}),t_{0})=\lambda_{1,p}(t_{0})$ and
$\lambda_{1,p}(u(\cdot,t_{0}-\xi),t_{0}-\xi)\geq\lambda_{1,p}(t_{0}-\xi)$
at time $t_{0}$. Together this fact with (\ref{5.1}), we have
\begin{eqnarray*}
\lambda_{1,p}(t_{0}-\xi)\leq\lambda_{1,p}(t_{0})
\end{eqnarray*}
for sufficiently small $\xi>0$. It follows that $\lambda_{1,p}(t)$
is monotone non-decreasing under the flow (\ref{1.1}), since $t_{0}$
can be chosen arbitrarily. The fact that $\lambda_{1,p}(t)$ is
differentiable everywhere on $[0,T_{\mathrm{m}})$ can be derived by
applying the classical Lebesgue's theorem. Similarly, if
$\rho^{2}\leq{n}\kappa(t)$, then $\lambda_{1,p}(t)$ is monotone
non-increasing and differentiable everywhere under the flow
(\ref{1.1}). The second assertion (II) of Theorem {\ref{theorem3}}
for the normalized flow can be obtained by almost the same process.
\end{proof}

\begin{remark}\rm{ It is surprising that $\lambda_{1}(t)$ and $\lambda_{1,p}(t)$
have the same monotonicity under the same assumptions, and one may
think that it is not necessary to derive the monotonicity of
$\lambda_{1}(t)$ independently, since $\lambda_{1}(t)$ is only a
special case of $\lambda_{1,p}(t)$, i.e.
$\lambda_{1,p}(t)=\lambda_{1}(t)$ when $p=2$. However, readers can
find that one cannot use the way for proving the monotonicity of
$\lambda_{1}(t)$ to get the monotonicity of $\lambda_{1,p}(t)$
directly (see the proof of Theorem \ref{theorem3} in Section 5).
Besides, by applying Theorem \ref{theoremmain}, we can know more
about $\widetilde{T}_{\mathrm{m}}$. More precisely, we can obtain:
if $\Xi=\infty$ with $\Xi$ defined by (\ref{2.8}), then
$T_{\mathrm{m}}<\infty$, and $\phi(t)\rightarrow\infty$ as
$t\rightarrow{T_{\mathrm{max}}}$, which implies
$\widetilde{T}_{\mathrm{m}}=\int_{0}^{T_{\mathrm{max}}}\phi^{2}(s)ds=\infty$;
if $0<\Xi<\infty$, then $T_{\mathrm{max}}=\infty$, which implies
$\widetilde{T}_{\mathrm{m}}=\int_{0}^{\infty}\phi^{2}(t)dt$; if
$\Xi=0$, then $T_{\mathrm{max}}=\infty$, while
$\widetilde{T}_{\mathrm{m}}=\int_{0}^{T_{\mathrm{m}}}\phi^{2}(t)dt=\int_{0}^{T}\phi^{2}(t)dt<\infty$.
}
\end{remark}

\section*{Acknowledgments}

The author was partially supported by the starting-up research fund
(Grant No. HIT(WH)201320) supplied by Harbin Institute of Technology
(Weihai). This paper was started when the author visited the Beijing
International Center for Mathematical Research (BICMR), Peking
University from March 2013 to April 2013, and the author here is
grateful to Prof. Yu-Guang Shi for the hospitality during his visit
to BICMR. The last revision of this paper was carried out when the
author visited the Chern Institute of Mathematics (CIM), Nankai
University in December 2013, and the author here is grateful to
Prof. Shao-Qiang Deng for the hospitality during his visit to CIM.
The author would like to thank CIM for supplying the financial
support during his visit through the Visiting Scholar Program.

 \end{document}